\documentclass{siamltex}
\bibliographystyle{siam}
\usepackage{algorithm,amsmath,xcolor,hyperref}
\usepackage{algorithmic,verbatim,url,comment}
\usepackage{amssymb,amsfonts,amscd,stmaryrd}
\usepackage{paralist,cite}
\usepackage{blkarray}
%\setdefaultleftmargin{2.5em}{1.9em}{1.8em}{}{}{}
\usepackage{IlseCmds}
\overfullrule=0pt

\title{Low-Rank Matrix Approximations Do Not Need\\
a Singular Value Gap\thanks{The work of the first author was 
supported in part by NSF grants IIS-1302231 and NSF IIS-1447283.}}

\author{Petros Drineas\thanks{Department of Computer Science, Purdue University, 
West Lafayette, IN, pdrineas@purdue.edu}
\and
Ilse C. F. Ipsen\thanks{Department of Mathematics, North Carolina State University, Raleigh, NC, ipsen@ncsu.edu}
}

\begin{document}
\maketitle

\begin{abstract}
This is a systematic investigation into the sensitivity of low-rank approximations of real matrices.
We show  that the low-rank approximation errors, in the two-norm, Frobenius norm and more generally,
any Schatten $p$-norm, are insensitive to
additive rank-preserving perturbations in the projector basis; and to matrix perturbations that are additive
or change the number of columns (including multiplicative perturbations). Thus, low-rank matrix approximations
are always well-posed and do not require a singular value gap. 
In the presence of a singular value gap, connections are established between low-rank approximations and
subspace angles.
\end{abstract}

\begin{keywords}
Singular value decomposition, principal angles,  additive perturbations, multiplicative perturbations. 
\end{keywords}

\begin{AM}
15A12, 15A18, 15A42,  65F15, 65F35.
\end{AM}

\section{Introduction}
An emerging problem in Theoretical Computer Science and Data Science is 
the low-rank approximation $\bZ\bZ^T\bA$ of a matrix $\bA\in\rmn$ by means of an orthonormal 
basis $\bZ\in\real^{m\times k}$ \cite{Drineas2016,TCS-060}.

The ideal low-rank approximation consists
of the left singular vectors $\bU_k$ associated with the $k$ dominant singular values 
$\sigma_1(\bA)\geq \cdots \geq \sigma_k(\bA)$ of $\bA$, because the low-rank approximation error in the two-norm
is minimal and equal to the first neglected singular value, $\|(\bI-\bU_k\bU_k^T)\bA\|_2=\sigma_{k+1}(\bA)$.
Low-rank approximation $\bZ$ can be determined with subspace iteration or a Krylov space 
method \cite{G2014,MM2015}, with bounds for
 $\|(\bI-\proj)\bA\|_{2,F}$ that contain $\sigma_{k+1}(\bA)$ as an additive
or multiplicative factor. Effort has been put into deriving bounds that not depend on the existence of the 
singular value gap $\sigma_k(\bA)-\sigma_{k+1}(\bA)>0$.    

A closely related problem in numerical linear algebra is the approximation of the dominant subspace
proper \cite{Saa80,Saa11},
that is, computing an orthonormal basis $\bZ\in \real^{m\times k}$ whose space is close to the 
dominant subspace $\range(\bU_k)$. Closeness here means that the sine of the largest principal angle
between the two spaces, $\|\sin{\bTheta}(\bZ,\bU_k)\|_2=\|\bZ\bZ^T-\bU_k\bU_k^T\|_2$ is small. 
For the dominant subspace $\bU_k$ to be well-defined, the associated singular values 
must be separated from the remaining singular values, and there must be a gap
$\sigma_k(\bA)-\sigma_{k+1}(\bA)>0$, see
\cite{Ips99b,Ste73,STS90,Wedin1972,Wedin1983,ZKny13} which are all based on the
perturbation results for invariant subspaces of Hermitian matrices \cite{DaK69,DaK70}.

The purpose of our paper, following up on \cite{DMKI16},
is to establish a clear distinction between the mathematical problems of low-rank approximation, and approximation of dominant subspaces. In particular we show that low-rank approximations are well-defined and well-conditioned, 
by deriving bounds for the low-rank approximation error $(\bI-\bZ\bZ^T)\bA$
in the two-norm, Frobenius norm, and more generally, any Schatten $p$-norm. 
We establish relationships between the mathematical problems of dominant subspace computation
and of low-rank approximation.

\paragraph{Overview}
After setting the notation for the singular value decomposition (Section~\ref{s_sv}),
and reviewing Schatten $p$-norms (Section~\ref{s_schatten}) and angles between subspaces (Section~\ref{s_angles}),
we highlight the main results (Section~\ref{s_high}), followed by proofs for low-rank approximations
(Section~\ref{s_sensla}) and subspace angles (Section~\ref{s_bounds}, Appendix~\ref{s_app}).

\subsection{Singular Value Decomposition (SVD)}\label{s_sv}
Let the non-zero matrix $\bA\in\rmn$ have a full SVD $\bA=\bU\bSigma\bV^T$, where
$\bU\in\rmm$ and $\bV\in\rnn$ are orthogonal matrices, i.e.\footnote{The superscript~$T$
denotes the transpose.}
$$\bU\bU^T=\bU^T\bU=\bI_m, \qquad \bV\bV^T=\bV^T\bV=\bI_n,$$
and $\bSigma\in\rmn$ a diagonal matrix with diagonal elements 
\begin{eqnarray}\label{e_svo}
\|\bA\|_2=\sigma_1(\bA)\geq \dots\geq\sigma_r(\bA)\geq 0,\qquad r\equiv\min\{m,n\}.
\end{eqnarray}
For $1\leq k\leq \rank(\bA)$, the respective leading $k$  columns of $\bU$ and $\bV$ are
 $\bU_k\in\real^{m\times k}$ and $\bV_k\in\real^{m\times k}$. They
are orthonormal, $\bU_k^T\bU_k=\bI_k=\bV_k^T\bV_k$, and 
are associated with the $k$ dominant singular values 
$$\bSigma_k\equiv\diag\begin{pmatrix}\sigma_1(\bA) & \cdots & \sigma_k(\bA)\end{pmatrix}\in\real^{k\times k}.$$ 
Then 
\begin{eqnarray}\label{e_bestrank}
\bA_k\ \equiv \ \bU_k\bSigma_k\bV_k^T \ = \ \bU_k\bU_k^T\bA
\end{eqnarray}
is a best rank-$k$ approximation of $\bA$, and satisfies in the two norm and  in the Frobenius norm, respectively,
$$\|(\bI-\bU_k\bU_k^T)\bA\|_{2,F}\ =\ \|\bA-\bA_k\|_{2,F} \ = \ \min_{\rank(\bB)=k}{\|\bA-\bB\|_{2,F}}.$$
 
\paragraph{Projectors}
We construct orthogonal projectors to capture the target space, which is a dominant subspace of $\bA$.

\begin{definition}\label{d_proj}
A matrix $\proj\in\rmm$ is an {\rm orthogonal projector}, if it is idempotent and symmetric,
\begin{eqnarray}\label{e_dproj}
\proj^2\ =\ \proj \ = \ \proj^T.
\end{eqnarray}
\end{definition}

For $1\leq k\leq \rank(\bA)$,
the matrix $\bU_k\bU_k^T=\bA_k\bA_k^{\dagger}$ is the orthogonal projector onto the
$k$-dimensional dominant subspace $\range(\bU_k)=\range(\bA_k)$.
Here the pseudo inverse is $\bA_k^{\dagger}=\bV_k \bSigma_k^{-1}\bU_k^T$.

\subsection{Schatten $p$-norms}\label{s_schatten} 
These are norms defined on the singular values of real and complex matrices, and thus special cases of symmetric 
gauge functions.
We briefly review their properties, based on \cite[Chapter IV]{Bhatia1997} and \cite[Sections 3.4-3.5]{HoJ91}.

\begin{definition}\label{d_norms}
For integers $p\geq 1$, the {\rm Schatten  $p$ norms} on $\rmn$ are
$$\spn{\bA}\ \equiv \ \sqrt[p]{\sigma_1(\bA)^p+\cdots+\sigma_r(\bA)^p}, \qquad 
 r\equiv\min\{m,n\}.$$
\end{definition}

\paragraph{Popular Schatten norms:}
\begin{description}
\item[$\qquad p=1:$\  ] Nuclear (trace) norm
$\ \|\bA\|_* \ =\ \sum_{j=1}^r{\sigma_j(\bA)}\ = \son{\bA}$.
\item[$\qquad p=2:$\ ] Frobenius norm
$\ \|\bA\|_F \ =\ \sqrt{\sum_{j=1}^r{\sigma_j(\bA)^2}}\ = \  \stn{\bA}$.
\item[$\qquad p=\infty:$\ ] Euclidean (operator) norm
$\ \|\bA\|_2\ =\ \sigma_1(\bA)\ = \ \sinf{\bA}$.
\end{description}
\bigskip

We will make ample use of the following properties.
\begin{lemma}
Let $\bA\in\rmn$, $\bB\in\real^{n\times \ell}$, and $\bC\in\real^{s\times m}$.
\begin{compactitem}
\item Unitary invariance:\\
If $\bQ_1\in\real^{s\times m}$ with $\bQ_1^T\bQ_1=\bI_m$ and
$\bQ_2\in\real^{\ell \times n}$ with $\bQ_2^T\bQ_2=\bI_n$, then
$$\spn{\bQ_1\bA\bQ_2^T}\ = \ \spn{\bA}.$$

\item Submultiplicativity: $\ \spn{\bA\bB}\ \leq \ \spn{\bA} \spn{\bB}$.

\item Strong submultiplicativity (symmetric norm):
$$\spn{\bC\bA\bB}\ \leq \ \sigma_1(\bC)\,\sigma_1(\bB)\,\spn{\bA} \ = \ \|\bC\|_2\,\|\bB\|_2\,\spn{\bA}.$$

\item Best rank-$k$ approximation: 
$$\spn{(\bI-\bU_k\bU_k^T)\bA}\ =\ \spn{\bA-\bA_k}\ = \ \min_{\rank(\bB)=k}{\spn{\bA-\bB}}$$
\end{compactitem}
\end{lemma}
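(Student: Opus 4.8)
The plan is to obtain all four items from the singular value decomposition together with the Courant--Fischer min--max characterization of singular values; each is a classical property of unitarily invariant norms and can alternatively be quoted verbatim from \cite[Chapter~IV]{Bhatia1997} or \cite[Sections~3.4--3.5]{HoJ91}. I would treat them in increasing order of difficulty. \emph{Unitary invariance} is immediate: writing a full SVD $\bA=\bU\bSigma\bV^T$ gives $\bQ_1\bA\bQ_2^T=(\bQ_1\bU)\,\bSigma\,(\bQ_2\bV)^T$, where $\bQ_1\bU$ and $\bQ_2\bV$ still have orthonormal columns; completing them to $s\times s$ and $\ell\times\ell$ orthogonal matrices by appending orthonormal columns, and padding $\bSigma$ with zero rows and columns to the size $s\times\ell$, exhibits a full SVD of $\bQ_1\bA\bQ_2^T$. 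Its singular values are therefore exactly those of $\bA$ followed by $\min\{s,\ell\}-\min\{m,n\}$ additional zeros, so the Schatten $p$-norm is unchanged.

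For \emph{strong submultiplicativity} the key step is the entrywise singular value bound
$$\sigma_j(\bC\bA\bB)\ \leq\ \|\bC\|_2\,\|\bB\|_2\,\sigma_j(\bA),\qquad j\geq1,$$
with the convention $\sigma_j(\bA)=0$ once $j>\rank(\bA)$. This follows from the min--max formula for $\sigma_j$: left multiplication by $\bC$ scales vector lengths by at most $\|\bC\|_2$, hence $\sigma_j(\bC\bA)\leq\|\bC\|_2\,\sigma_j(\bA)$; passing to transposes gives $\sigma_j(\bA\bB)=\sigma_j(\bB^T\bA^T)\leq\|\bB\|_2\,\sigma_j(\bA)$; composing the two estimates (applied first to $\bA\bB$ and then to $\bA$) yields the display. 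Raising it to the $p$-th power, summing over $j$, and taking $p$-th roots gives $\spn{\bC\bA\bB}\leq\|\bC\|_2\,\|\bB\|_2\,\spn{\bA}$, and $\|\bC\|_2=\sigma_1(\bC)$ by \eqref{e_svo}. Plain submultiplicativity is then the special case $\bC=\bI$ combined with the elementary inequality $\|\bB\|_2=\sigma_1(\bB)\leq\spn{\bB}$.

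For the \emph{best rank-$k$ approximation}, the first equality is exactly \eqref{e_bestrank}, so it suffices to note that the singular values of $\bA-\bA_k=(\bI-\bU_k\bU_k^T)\bA$ are $\sigma_{k+1}(\bA)\geq\cdots\geq\sigma_r(\bA)$, whence $\spn{\bA-\bA_k}=\bigl(\sum_{j>k}\sigma_j(\bA)^p\bigr)^{1/p}$. For the minimality claim (Mirsky's theorem) I would take any $\bB$ with $\rank(\bB)\leq k$ and apply the Weyl-type perturbation inequality to $\bA=(\bA-\bB)+\bB$: it yields $\sigma_{k+i}(\bA)\leq\sigma_i(\bA-\bB)+\sigma_{k+1}(\bB)$ for every $i\geq1$, and $\sigma_{k+1}(\bB)=0$ because $\rank(\bB)\leq k$. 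Hence $\spn{\bA-\bB}^p=\sum_i\sigma_i(\bA-\bB)^p\geq\sum_i\sigma_{k+i}(\bA)^p=\spn{\bA-\bA_k}^p$, with equality attained at $\bB=\bA_k$.

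The main obstacle is precisely this last lower bound: the other three items are one- or two-line consequences of the min--max characterization, whereas Mirsky's bound rests on the Weyl perturbation inequality for the singular values of a sum of two matrices, which I would either derive from a min--max argument over intersections of subspaces of suitable dimensions or simply cite from \cite[Sections~3.4--3.5]{HoJ91}. A secondary point deserving care throughout is the bookkeeping of zero padding when the products $\bQ_1\bA\bQ_2^T$, $\bA\bB$, or $\bC\bA\bB$ change the matrix dimensions, so that all sums defining the Schatten norms run over a common index range.
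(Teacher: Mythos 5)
Your proposal is correct. Note that the paper itself offers no proof of this lemma: it is stated as a review of standard properties of Schatten $p$-norms, with the reader referred to \cite[Chapter IV]{Bhatia1997} and \cite[Sections 3.4--3.5]{HoJ91}, so there is no in-paper argument to compare against. Your derivations are the standard textbook ones and are sound: the SVD/padding argument for unitary invariance, the singular value inequality $\sigma_j(\bC\bA\bB)\leq\|\bC\|_2\|\bB\|_2\sigma_j(\bA)$ (which the paper itself later invokes as \cite[(7.3.14)]{HoJ12}) followed by summation of $p$-th powers, and the Weyl-type bound $\sigma_{k+i}(\bA)\leq\sigma_i(\bA-\bB)$ for $\rank(\bB)\leq k$ for the Eckart--Young--Mirsky minimality. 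One point worth making explicit: the full Mirsky theorem for arbitrary unitarily invariant norms requires Ky Fan dominance, but for Schatten $p$-norms the termwise singular-value comparison you use suffices because $\spn{\cdot}$ is a monotone function of the vector of singular values, so your shortcut is legitimate here.
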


\subsection{Principal Angles between Subspaces}\label{s_angles}
We review the definition of angles between subspaces, and the connections between angles and projectors.

\begin{definition}[Section 6.4.3 in \cite{GovL13} and Section 2 in \cite{ZKny13}]
Let $\bZ\in\real^{m\times k}$ and $\hZ\in\real^{m\times \ell}$ with $\ell\geq k$
have orthonormal columns so that $\bZ^T\bZ=\bI_k$ and $\hZ^T\hZ=\bI_{\ell}$.
 The singular values of $\bZ^T\hZ$ are the diagonal elements of the $k\times k$ diagonal matrix
$$\cos{\bTheta}(\bZ,\hZ)\equiv\diag\begin{pmatrix}\cos{\theta_1}&\cdots &\cos{\theta_k}\end{pmatrix},$$
where $\theta_j$ are the {\rm principal (canonical) angles} between $\range(\bZ)$ and $\range(\hZ)$.
\end{definition}

Next we show how to extract the principal angles between two subspaces of possibly different dimensions,
we make use of projectors.

\begin{lemma}\label{l_projbasis}
Let $\proj\equiv \bZ\bZ^T$ and $\projt\equiv \hZ\hZ^T$ be orthogonal projectors, where
$\bZ\in\real^{m\times k}$ and $\hZ\in\real^{m\times \ell}$ have orthonormal columns. Let $\ell\geq k$, and  define
$$\sin{\bTheta}(\proj,\projt)=\sin{\bTheta}(\bZ,\hZ)\
\equiv\ \diag\begin{pmatrix}\sin{\theta_1}&\cdots &\sin{\theta_k}\end{pmatrix},$$
where $\theta_j$ are the $k$ principal angles between $\range(\bZ)$ and $\range(\hZ)$.
\begin{enumerate}
\item If  $\rank(\hZ)=k=\rank(\bZ)$, then
\begin{eqnarray*}
\spn{\sin{\bTheta}(\bZ,\hZ)}&=&\spn{(\bI-\proj)\,\projt}=\spn{(\bI-\projt)\,\proj}.
\end{eqnarray*}
In particular
$$\|(\bI-\proj)\,\projt\|_2=\|\proj-\projt\|_2\leq 1$$
represents the distance between the subspaces $\range(\proj)$ and $\range(\projt)$.
\item If $\rank(\hZ)>k=\rank(\bZ)$, then
$$\spn{\sin{\bTheta}(\bZ,\hZ)}=\spn{(\bI-\projt)\,\proj}\ \leq \ \spn{(\bI-\proj)\,\projt}.$$
\end{enumerate}
\end{lemma}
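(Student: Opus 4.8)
The plan is to reduce the whole lemma to bookkeeping of the singular values of the $k\times\ell$ matrix $\bZ^T\hZ$, which by definition of the principal angles are $\cos\theta_1,\dots,\cos\theta_k$. The one not-quite-routine ingredient is the elementary fact that, because $\hZ$ has orthonormal columns, the matrix $\hZ\bB\hZ^T$ has the same eigenvalues as a symmetric matrix $\bB$ together with $m-\ell$ additional zeros (extend $\hZ$ to an orthogonal matrix $\begin{pmatrix}\hZ & \hZ_\perp\end{pmatrix}$ and conjugate); the analogous statement holds with $\bZ$ in place of $\hZ$.

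First I would analyze $(\bI-\projt)\,\proj$ and $(\bI-\proj)\,\projt$ separately through their Gram matrices, using that $\bI-\proj$ and $\bI-\projt$ are orthogonal projectors. On one hand,
$$\bigl[(\bI-\proj)\,\projt\bigr]^T(\bI-\proj)\,\projt=\projt\,(\bI-\proj)\,\projt=\hZ\bigl(\bI_\ell-(\bZ^T\hZ)^T(\bZ^T\hZ)\bigr)\hZ^T,$$
and since $(\bZ^T\hZ)^T(\bZ^T\hZ)$ is $\ell\times\ell$ of rank at most $k$ with eigenvalues $\cos^2\theta_1,\dots,\cos^2\theta_k$ and $\ell-k$ zeros, the matrix $\projt(\bI-\proj)\projt$ has eigenvalues $\sin^2\theta_1,\dots,\sin^2\theta_k$ together with $\ell-k$ ones (and zeros). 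Hence the singular values of $(\bI-\proj)\projt$ are $\sin\theta_1,\dots,\sin\theta_k$ together with $\ell-k$ ones. On the other hand,
$$\bigl[(\bI-\projt)\,\proj\bigr]^T(\bI-\projt)\,\proj=\proj\,(\bI-\projt)\,\proj=\bZ\bigl(\bI_k-(\bZ^T\hZ)(\bZ^T\hZ)^T\bigr)\bZ^T,$$
and now $(\bZ^T\hZ)(\bZ^T\hZ)^T$ is $k\times k$ with eigenvalues exactly $\cos^2\theta_1,\dots,\cos^2\theta_k$, so the singular values of $(\bI-\projt)\proj$ are precisely $\sin\theta_1,\dots,\sin\theta_k$. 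Since Schatten norms depend only on the singular values, this already yields $\spn{(\bI-\projt)\proj}=\spn{\sin\bTheta(\bZ,\hZ)}$ in both cases.

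For part 1, orthonormality of the columns of $\hZ$ forces $\rank(\hZ)=\ell$, so $\rank(\hZ)=k$ means $\ell=k$ and the ``$\ell-k$ extra ones'' disappear: $(\bI-\proj)\projt$ and $(\bI-\projt)\proj$ then have the same singular values $\sin\theta_1,\dots,\sin\theta_k$, which gives the stated equality chain, and $\|\sin\bTheta(\bZ,\hZ)\|_2\le1$ because principal angles lie in $[0,\pi/2]$. The remaining identity $\|(\bI-\proj)\projt\|_2=\|\proj-\projt\|_2$ for two orthogonal projectors of equal rank is classical --- it follows from the CS decomposition, or from \cite{Ste73,STS90,ZKny13} --- so I would simply cite it. For part 2, $\rank(\hZ)=\ell>k$, so $(\bI-\proj)\projt$ carries at least one singular value equal to $1$; hence $\bigl(\spn{(\bI-\proj)\,\projt}\bigr)^p=\sum_{j=1}^k\sin^p\theta_j+(\ell-k)\ge\sum_{j=1}^k\sin^p\theta_j=\bigl(\spn{\sin\bTheta(\bZ,\hZ)}\bigr)^p$ for finite $p$, and $\|(\bI-\proj)\projt\|_2=1\ge\|\sin\bTheta(\bZ,\hZ)\|_2$ for the operator norm, which is exactly the asserted inequality.

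The only step demanding genuine care is the eigenvalue count for the two rectangular Gram matrices: $(\bZ^T\hZ)^T(\bZ^T\hZ)$ and $(\bZ^T\hZ)(\bZ^T\hZ)^T$ have the same nonzero eigenvalues but differ by exactly $\ell-k$ zero eigenvalues, and it is precisely those spurious zeros that turn into the $\ell-k$ singular values equal to $1$ after subtracting from the identity. This one dimensional asymmetry is what makes part 1 an equality but part 2 only a one-sided inequality; the rest is routine manipulation of idempotents and unitary invariance.
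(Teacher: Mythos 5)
Your proof is correct, but it takes a genuinely different route from the paper. The paper disposes of this lemma almost entirely by citation: the two-norm identities are attributed to Golub--Van Loan and Wedin, and the Schatten $p$-norm statements are read off from the full CS decomposition of $\begin{pmatrix}\bZ & \bZ_{\perp}\end{pmatrix}^T\begin{pmatrix}\hZ & \hZ_{\perp}\end{pmatrix}$ (Theorem~\ref{t_csnequal} and Corollary~\ref{c_csnequal} in the appendix), which exhibits the complete block structure of all four products $\bZ^T\hZ$, $\bZ^T\hZ_{\perp}$, $\bZ_{\perp}^T\hZ$, $\bZ_{\perp}^T\hZ_{\perp}$ at once. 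You instead compute the two Gram matrices $\projt(\bI-\proj)\projt=\hZ\bigl(\bI_{\ell}-(\bZ^T\hZ)^T(\bZ^T\hZ)\bigr)\hZ^T$ and $\proj(\bI-\projt)\proj=\bZ\bigl(\bI_k-(\bZ^T\hZ)(\bZ^T\hZ)^T\bigr)\bZ^T$ and count eigenvalues directly, so that the entire lemma reduces to the observation that the $\ell\times\ell$ and $k\times k$ Gram matrices of $\bZ^T\hZ$ share their nonzero spectrum but differ by $\ell-k$ zeros, which become $\ell-k$ unit singular values of $(\bI-\proj)\projt$. This is more elementary and self-contained --- it needs only unitary invariance and the definition of principal angles, not the CS decomposition --- and it isolates exactly why part 1 is an equality while part 2 is one-sided. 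What the paper's approach buys in exchange is the finer information in Corollary~\ref{c_csnequal} (e.g.\ the identity for $\|\cos\bTheta(\bZ_{\perp},\hZ_{\perp})\|_{2,F}$ used in Theorems~\ref{t_lal1} and~\ref{t_lal2}), which your singular-value bookkeeping does not produce. Like the paper, you still outsource the classical identity $\|(\bI-\proj)\projt\|_2=\|\proj-\projt\|_2$ for equal-rank projectors to the literature, which is acceptable here.
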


\begin{proof}
The two-norm expressions follow from \cite[Section~2.5.3]{GovL13} and \cite[Section~2]{Wedin1983}.
The Schatten $p$-norm expressions follow from the CS decompositions in 
\cite[Theorem 8.1]{PaigeWei94}, \cite[Section 2]{ZKny13}, and Section~\ref{s_app}.
 \end{proof}

\subsection{Highlights of the Main Results}\label{s_high}
We present a brief overview of the main results: The well-conditioning of low-rank
approximations under additive perturbations in projector basis and the matrix (Section~\ref{s_low});
the well-conditioning of low-rank approximations under perturbations that change the matrix dimension
(Section~\ref{s_dimchange});
and the connection between low-rank approximation errors and angles between subspaces (Section~\ref{s_angle}).

Thus: Low-rank approximations are well-conditioned, and don't need a gap.

\subsubsection{Additive perturbations in the projector basis and the matrix}\label{s_low}
We show that the low-rank approximation error is insensitive to
additive rank-preserving perturbations in the projector basis (Theorem~\ref{t_a} and Corollary~\ref{c_a}),
and to additive perturbations in the matrix (Theorem~\ref{t_ba} and Corollary~\ref{c_ba}). 

We start with perturbations in the projector basis.

\begin{mytheorem}[Additive rank-preserving perturbations in the projector basis]\label{t_a}
Let $\bA\in\rmn$; let $\bZ\in\real^{m\times\ell}$ be a projector basis with orthonormal columns
so that $\bZ^T\bZ=\bI_{\ell}$;
and let $\hZ\in\real^{m\times \ell}$ be  its perturbation with
$$\epsilon_Z \equiv\ \|\hZ^{\dagger}\|_2\,\|\bZ-\hZ\|_2\  = \
\underbrace{\|\hZ\|_2\|\hZ^{\dagger}\|_2}_{\mathrm{Deviation~from} \atop \mathrm{orthonormality}}
\underbrace{\frac{\|\hZ-\bZ\|_2}{\|\hZ\|_2}.}_{\mathrm{Relative~distance} \atop \mathrm{from~exact~basis}}$$
\begin{compactenum}
\item If $\rank(\hZ)=\rank(\bZ)$ then
\begin{eqnarray*}
\spn{(\bI-\bZ\bZ^T)\bA} - \epsilon_Z\,\spn{\bA} & \leq &\spn{(\bI-\hZ\hZ^{\dagger})\bA}\\
&&\qquad\qquad \leq \spn{(\bI-\bZ\bZ^T)\bA}+ \epsilon_Z\,\spn{\bA}.
\end{eqnarray*}
\item If $\|\bZ-\hZ\|_2\leq 1/2$, then $\rank(\hZ)=\rank(\bZ)$ and $\epsilon_Z\leq 2\,\|\bZ-\hZ\|_2$.
\end{compactenum}
\end{mytheorem}

\begin{proof}
See Section~\ref{s_sensla}, and in particular Theorem~\ref{t_perturbla}.
\end{proof}

Theorem~\ref{t_a} bounds the change in the absolute approximation error
in terms of the additive perturbation $\epsilon_Z$ amplified by the norm of $\bA$.
The term $\epsilon_Z$ can also be written as the  product of  two factors: 
(i) the two-norm condition number $\|\hZ\|_2\|\hZ^{\dagger}\|_2$ of the perturbed basis with regard to (left) inversion;
(ii) and relative two-norm distance between the bases.
The assumption here is that the perturbed vectors $\hZ$
are linearly independent, but not necessarily orthonormal. Hence the Moore
Penrose inverse replaces the transpose in the orthogonal projector,
and the condition number represents the deviation of $\hZ$ from orthonormality.
The special case $\|\bZ-\hZ\|_2\leq 1/2$ implies both that the perturbed projector
basis is well-conditioned and that it is close to the exact basis.

The lower bound in Theorem~\ref{t_a} simplifies when the columns of $\bZ$ are dominant singular vectors of~$\bA$. No singular value gap is required below, as we merely pick 
the leading $k$ columns of $\bU$ from some SVD of $\bA$, and then perturb them.
 
\begin{mycorollary}[Rank-preserving perturbation of dominant singular vectors]\label{c_a}
Let $\bU_k\in\real^{m\times k}$ in (\ref{e_bestrank}) be $k$ dominant left singular 
vectors of $\bA$. Let $\hU\in\real^{m\times k}$ be a perturbation of $\bU_k$
with $\rank(\hU)=k$ or $\|\bU_k-\hU\|_2\leq 1/2$; 
and let $\epsilon_U \equiv\ \|\hU^{\dagger}\|_2\,\|\bU_k-\hU\|_2$.
Then
\begin{eqnarray*}
\spn{(\bI-\bU_k\bU_k^T)\bA} &  \leq & \spn{(\bI-\hU\hU^{\dagger})\bA}\leq
\spn{(\bI-\bU_k\bU_k^T)\bA} + \epsilon_U\,\spn{\bA}.
\end{eqnarray*}
\end{mycorollary}

Next we consider perturbations in the matrix, with a bound  that is  completely general and
holds for any projector $\proj$ in any Schatten $p$-norm.

\begin{mytheorem}[Additive perturbations in the matrix]\label{t_ba}
Let $\bA$, $\bA+\bE\in\rmn$; and let $\proj\in\rmm$ be an orthogonal projector as in
(\ref{e_dproj}). Then
\begin{eqnarray*}
\bigg|\, \spn{(\bI-\proj)\, (\bA+\bE)}- \spn{(\bI-\proj)\bA}\, \bigg| \ \leq\  \spn{\bE}.
\end{eqnarray*}
\end{mytheorem}

\begin{proof} See Section~\ref{s_sensla}, and in particular Theorem~\ref{t_perturbm}.
\end{proof}

Theorem~\ref{t_ba} shows that the low-rank approximation error is well-conditioned, in the absolute sense,
to additive perturbations in the matrix. 

Theorem~\ref{t_ba} also implies the following upper bound for
a low-rank approximation of $\bA$  by means of singular vectors of $\bA+\bE$.
Again, no singular value gap is required. We merely pick 
the leading $k$ columns $\bU_k$ obtained from some SVD of $\bA$, and the
leading $k$ columns $\hU_k$ obtained from some SVD of $\bA+\bE$.
 
\begin{mycorollary}[Low-rank approximation from additive perturbation]\label{c_ba}
Let $\bU_k\in\real^{m\times k}$  in (\ref{e_bestrank}) be $k$ dominant left singular vectors 
of $\bA$;
and let $\hU_k\in\real^{m\times k}$ be $k$ dominant left singular vectors of $\bA+\bE$.
Then
$$\|(\bI-\bU_k\bU_k^T)\bA\|_2 \ \leq\ 
\|(\bI-\hU_k \hU_k^T)\bA\|_{2}\, \ \leq\ \|(\bI-\bU_k\bU_k^T)\bA\|_2 + 2\|\bE\|_{2}.$$
\end{mycorollary}

\begin{proof}
See Section~\ref{s_sensla}, and in particular Corollary~\ref{c_cba}.
\end{proof}

Bounds with an additive dependence on $\bE$, like the two-norm bound above,
can be derived for other Schatten $p$-norms as well, and can then be combined 
with bounds for $\bE$ in
 \cite{Kundu2014,Achlioptas2007,Drineas2011}
where $\bA+\bE$  is obtained from element-wise sampling from $\bA$. 
 
 \subsubsection{Perturbation that change the matrix dimension}\label{s_dimchange}
 We consider perturbations that can change the number of columns in~$\bA$ and 
include, among others,
multiplicative perturbations of the form $\wA=\bA\bX$. However, our bounds
are completely general
and hold also in the absence of any relation between $\range(\bA)$ and $\range(\wA)$.

Presented below are bounds for the two-norm (Theorem~\ref{t_bb2}), the Frobenius norm (Theorem~\ref{t_bbF})
and general Schatten $p$-norms (Theorem~\ref{t_bbp}). 

\begin{mytheorem}[Two-norm]\label{t_bb2}
Let $\bA\in\rmn$; $\wA\in\real^{m\times c}$; and $\proj\in\rmm$ an orthogonal projector
as in (\ref{e_dproj}) with $\rank(\proj)=c$.
Then
\begin{eqnarray}\label{e_bb21}
\bigg|\|(\bI-\proj)\,\bA\|_{2}^2-  \|(\bI-\proj)\,\wA\|_{2}^2\bigg|  \ \leq \  \|\bA\bA^T-\wA\wA^T\|_2.
\end{eqnarray}
If also $\rank(\wA)=c$ then
\begin{eqnarray}\label{e_bb22}
\|(\bI-\wA\wA^{\dagger})\,\bA\|_{2}^2 \ \leq \  \|\bA\bA^T-\wA\wA^T\|_2.
\end{eqnarray}
If $\wA_k\in\real^{m\times c}$ is a best rank -$k$ approximation of $\wA$ with $\rank(\wA_k)=k<c$ then
\begin{eqnarray}\label{e_bb23}
\|(\bI-\wA_k\wA_k^{\dagger})\,\bA\|_{2}^2 \ \leq\  \|\bA-\bA_k\|_2^2+ 2\,\|\bA\bA^T -\wA\wA^T\|_{2}.
\end{eqnarray}
\end{mytheorem}

\begin{proof}
See Section~\ref{s_sensla}, specifically 
Theorem~\ref{t_perturbmm} for (\ref{e_bb21});
Theorem~\ref{t_lc} for (\ref{e_bb22}); and 
and Theorem~\ref{t_lck} for (\ref{e_bb23}).
\end{proof}

The bounds~(\ref{e_bb23})  are identical to~\cite[Theorem 3]{DKM06}, while
(\ref{e_bb21}) and~(\ref{e_bb22}), though similar in spirit, are novel. 
The bound (\ref{e_bb21}) holds for any orthogonal projector $\proj$, 
in contrast to prior work which was limited to multiplicative perturbations $\wA=\bA\bX$
with bounds for $\|\bA\bA^T-\wA\wA^T\|_2$ for matrices $\bX$ that sample and rescale columns 
\cite{DKM06a,Holodnak2015},   and other constructions of $\bX$  \cite{TCS-060}.

\begin{mytheorem}[Frobenius norm]\label{t_bbF}
Let $\bA\in\rmn$; $\wA\in\real^{m\times c}$; and $\proj\in\rmm$ an orthogonal projector 
as in (\ref{e_dproj})
with $\rank(\proj)=c$.
Then
\begin{eqnarray}
\lefteqn{\bigg|\|(\bI-\proj)\,\wA\|_F^2  -\|(\bI-\proj)\,\bA\|_F^2\bigg| \  \leq}\label{e_bbF1}\\
& &\ \min\bigg\{ \|\bA\bA^T-\wA\wA^T\|_*, \quad \sqrt{m-c}\,\|\bA\bA^T-\wA\wA^T\|_F\bigg\}.\nonumber
\end{eqnarray}
If also $\rank(\wA)=c$ then
\begin{eqnarray}\label{e_bbF2}
\|(\bI-\wA\wA^{\dagger})\,\bA\|_{F}^2 \ \leq
\ \min\bigg\{ \|\bA\bA^T-\wA\wA^T\|_*, \ \sqrt{m-c}\,\|\bA\bA^T-\wA\wA^T\|_F\bigg\}.
\end{eqnarray}
If $\wA_k\in\real^{m\times c}$ is a best rank-$k$  approximation of $\wA$ with $\rank(\wA_k)=k<c$ then
\begin{eqnarray}
(\bI-\wA_k\wA_k^{\dagger})\,\bA\|_{F}^2 \ &\leq&\ \|\bA-\bA_k\|_F^2\label{e_bbF3}\\
&&+2\, \min\big\{ \|\bA\bA^T-\wA\wA^T\|_*, \, \sqrt{m-c}\,\|\bA\bA^T-\wA\wA^T\|_F\big\}.\nonumber
\end{eqnarray}
\end{mytheorem}

\begin{proof}
See Section~\ref{s_sensla}, specifically 
Theorem~\ref{t_perturbmm} for (\ref{e_bbF1});
Theorem~\ref{t_lc} for (\ref{e_bbF2});
and Theorem~\ref{t_lck} for (\ref{e_bbF3}).
\end{proof}

The bound (\ref{e_bbF1}) holds for any $\proj$, and is the first one of its kind in this generality.
The bound (\ref{e_bbF3}) is similar to \cite[Theorem 2]{DKM06}, and weaker for smaller $k$ but tighter for  larger $k$. 

More generally, Theorem~\ref{t_bbF} relates the low-rank approximation error in the Frobenius norm 
with the error  $\bA\bA^T-\wA\wA^T$ in the trace norm, i.e. the Schatten one-norm.
This is a novel connection, and it  should motivate further work into understanding the behaviour
of the trace norm, thereby complementing prior investigations into the two-norm and Frobenius norm.

\begin{mytheorem}[General Schatten $p$-norms]\label{t_bbp}
Let $\bA\in\rmn$; $\wA\in\real^{m\times c}$; and $\proj\in\rmm$ an orthogonal projector 
as in (\ref{e_dproj})
with $\rank(\proj)=c$.
Then
\begin{eqnarray}
\lefteqn{\bigg|\spn{(\bI-\proj)\,\bA}^2-  \spn{(\bI-\proj)\,\wA}^2\bigg| \ \leq\label{e_bbp1}}\\
& &\ \min\bigg\{ \spnt{\bA\bA^T-\wA\wA^T}, \quad \sqrt[p]{m-c}\,\spn{\bA\bA^T-\wA\wA^T}\bigg\}.
\nonumber
\end{eqnarray}
If $\rank(\wA)=c$ then
\begin{eqnarray}
\lefteqn{\spnt{(\bI-\wA\wA^{\dagger})\,\bA}^2 \ \leq}\label{e_bbp2}\\
& & \ \min\bigg\{ \spnt{\bA\bA^T-\wA\wA^T}, \quad \sqrt[p]{m-c}\,\spn{\bA\bA^T-\wA\wA^T}\bigg\}.\nonumber
\end{eqnarray}
If $\wA_k\in\real^{m\times c}$ is a best rank approximation of $\wA$ with $\rank(\wA_k)=k<c$ then
\begin{eqnarray}
\spnt{(\bI-\wA_k\wA_k^{\dagger})\,\bA}^2 \ &\leq&\ \spn{\bA-\bA_k}^2+\label{e_bbp3}\\
&&2\, \min\big\{ \spnt{\bA\bA^T-\wA\wA^T}, \, \sqrt[p]{m-c}\,\spn{\bA\bA^T-\wA\wA^T}\big\}.\nonumber
\end{eqnarray}
\end{mytheorem}

\begin{proof}
See Section~\ref{s_sensla}, specifically 
Theorem~\ref{t_perturbmm} for (\ref{e_bbp1});
Theorem~\ref{t_lc} for (\ref{e_bbp2});
and Theorem~\ref{t_lck} for  (\ref{e_bbp3}).
\end{proof}

Theorem~\ref{t_bbp} is new. To our knowledge, non-trivial bounds for 
$\spn{\bA\bA^T-\wA\wA^T}$ for general~$p$ do no exist.

\subsubsection{Relations between low-rank approximation error and subspace angle}\label{s_angle}
For matrices $\bA$ with a singular value gap,
we bound the low-rank approximation error in terms of the subspace 
angle (Theorem~\ref{t_c}) 
and discuss the tightness of the bounds (Remark~\ref{r_tc}).
The singular value gap is required for the dominant subspace to be well-defined, 
but no assumptions on the accuracy of the low-rank approximation are required.
 
Assume that $\bA \in \rmn$ has a gap after the $k$th singular value,
\begin{equation*}
\|\bA\|_2=\sigma_1(\bA)\geq \cdots\geq \sigma_k(\bA)> 
\sigma_{k+1}(\bA)\geq \cdots \geq \sigma_r(\bA)\geq 0,\qquad
r\equiv\min\{m,n\},
\end{equation*}

Below, we highlight the bounds from Section~\ref{s_bounds} for low-dimensional approximations,
compared to the dimension $m$ of the host space for $\range(\bA$).

\begin{mytheorem}\label{t_c}
Let $\proju\equiv \bA_k\bA_k^{\dagger}$ be the orthogonal projector onto the
dominant $k$-dimensional subspace of $\bA$; and
let $\proj\in\rmm$ be some orthogonal projector as in (\ref{e_dproj}) with
$k\leq \rank(\proj)<m-k$. Then
\begin{equation*}\label{e_tc1}
\sigma_k(\bA)\,\spn{\sin{\bTheta(\proj,\proju)}} \leq \spn{(\bI-\proj)\bA} \leq
\|\bA\|_2\,\spn{\sin{\bTheta(\proj,\proju)}}+\spn{\bA-\bA_k}.
\end{equation*}
\end{mytheorem}

\begin{proof} See Section~\ref{s_bounds}, and in particular Theorems~\ref{t_lau} and~\ref{t_lal1}.
\end{proof}

Theorem~\ref{t_c} shows  that for dominant subspaces of sufficiently low dimension, the approximation error is
bounded by the product of the subspace angle with a dominant singular value.
The upper bound also contains the subdominant singular values.

\begin{myremark}[Tightness of Theorem~\ref{t_c}]\label{r_tc}
\begin{compactitem}
\item If $\rank(\bA)=k$, so that $\bA-\bA_k=\bzero$, then the tightness
depends on the spread of the non-zero singular values,
$$\sigma_k(\bA)\,\spn{\sin{\bTheta(\proj,\proju)}} \leq \spn{(\bI-\proj)\,\bA} \leq
\|\bA\|_2\,\spn{\sin{\bTheta(\proj,\proju)}}.$$

\item If $\rank(\bA)=k$ and $\sigma_1(\bA)= \dots =\sigma_k(\bA)$, then the bounds are tight,
and they are equal to 
$$ \spn{(\bI-\proj)\,\bA} = \|\bA\|_2\,\spn{\sin{\bTheta(\proj,\proju)}}.$$

\item If $\range(\proj)=\range(\proju)$, so that $\sin{\bTheta(\proj,\proju)}=\bzero$,
then the upper bound is tight and equal to
$$\spn{(\bI-\proj)\,\bA} = \spn{\bA-\bA_k}.$$
 \end{compactitem}
 \end{myremark}

\section{Well-conditioning of low-rank approximations}\label{s_sensla}
We investigate the effect of additive, rank-preserving perturbations in the projector basis
on the orthogonal projector (Section~\ref{s_opperturb})
and on the low-rank approximation error (Section~\ref{s_lrperturb}); and the effect 
on the low-rank approximation error of matrix perturbations, both additive and dimension changing
(Section~\ref{s_lrmperturb}). We also relate  low-rank approximation
error and error matrix (Section~\ref{s_errmatrix}).

\subsection{Orthogonal projectors, and perturbations in the projector basis}\label{s_opperturb}
We show that orthogonal projectors and subspace angles are insensitive
to additive,  rank-preserving perturbations in the projector basis (Theorem~\ref{t_perturbp})
if the perturbed projector basis is well-conditioned.

\begin{theorem}\label{t_perturbp}
Let $\bA\in\rmn$; let $\bZ\in\real^{m\times s}$ be a projector basis with orthonormal
columns so that $\bZ^T\bZ=\bI_s$; and let $\hZ\in\real^{m\times s}$ be its perturbation.
\begin{compactenum}
\item If $\rank(\hZ)=\rank(\bZ)$, then the {\rm distance} between $\range(\bZ)$ and $\range(\hZ)$ is
\begin{equation*}
\|\bZ\bZ^T-\hZ\hZ^{\dagger}\|_2=\|\sin{\bTheta(\bZ,\hZ)}\|_2\leq \epsilon_Z\equiv\ \|\hZ^{\dagger}\|_2\,\|\bZ-\hZ\|_2.
\end{equation*}
\item If $\|\bZ-\hZ\|_2\leq 1/2$, then $\rank(\hZ)=\rank(\bZ)$ and $\epsilon_Z\leq 2\,\|\bZ-\hZ\|_2$.
\end{compactenum}
\end{theorem}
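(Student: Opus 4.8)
The plan is to reduce everything to the identity $\bZ\bZ^T - \hZ\hZ^{\dagger} = (\bI - \hZ\hZ^{\dagger})\bZ\bZ^T$, which holds because $\hZ\hZ^{\dagger}$ is the orthogonal projector onto $\range(\hZ)$ and $\hZ\hZ^{\dagger}\bZ\bZ^T$ is symmetric when the two ranges coincide... more carefully, under the rank-preserving assumption $\rank(\hZ)=\rank(\bZ)=s$, one uses the standard fact (e.g.\ from the CS decomposition invoked in Lemma~\ref{l_projbasis}) that for two equidimensional subspaces $\|\proj-\projt\|_2 = \|(\bI-\proj)\projt\|_2 = \|\sin\bTheta(\bZ,\hZ)\|_2$; this gives the first equality in part~1 for free. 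So the real content of part~1 is the inequality $\|(\bI-\hZ\hZ^{\dagger})\bZ\bZ^T\|_2 \le \|\hZ^{\dagger}\|_2\|\bZ-\hZ\|_2$. For this I would write $(\bI-\hZ\hZ^{\dagger})\bZ$ and observe that $(\bI-\hZ\hZ^{\dagger})\hZ = \bzero$, hence $(\bI-\hZ\hZ^{\dagger})\bZ = (\bI-\hZ\hZ^{\dagger})(\bZ-\hZ)$. Taking two-norms and using $\|\bI-\hZ\hZ^{\dagger}\|_2\le 1$ (it is an orthogonal projector) yields $\|(\bI-\hZ\hZ^{\dagger})\bZ\|_2 \le \|\bZ-\hZ\|_2$. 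But we need $\epsilon_Z = \|\hZ^{\dagger}\|_2\|\bZ-\hZ\|_2$, which is \emph{weaker}, so in fact the cleaner route is to multiply on the right by $\bZ^T$ (norm-preserving here since $\bZ$ has orthonormal columns) — that already bounds $\|\bZ\bZ^T-\hZ\hZ^{\dagger}\|_2$ by $\|\bZ-\hZ\|_2 \le \epsilon_Z$ whenever $\|\hZ^{\dagger}\|_2\ge 1$. One should double-check that $\|\hZ^{\dagger}\|_2 \ge 1$ always holds when $\rank(\hZ)=s$ and $\hZ$ is close to a matrix with orthonormal columns; if not, the factor is genuinely needed and the argument must instead pass through $\hZ^{\dagger}$ explicitly, writing $\hZ\hZ^{\dagger}\bZ\bZ^T$ and inserting $\hZ^{\dagger}\hZ$.

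For part~2, the plan is a perturbation-of-singular-values argument: since $\bZ$ has orthonormal columns, $\sigma_{\min}(\bZ)=1$, so by Weyl's inequality $\sigma_{\min}(\hZ) \ge 1 - \|\bZ-\hZ\|_2 \ge 1/2 > 0$, which immediately gives $\rank(\hZ)=s=\rank(\bZ)$. The same bound gives $\|\hZ^{\dagger}\|_2 = 1/\sigma_{\min}(\hZ) \le 1/(1-\|\bZ-\hZ\|_2) \le 2$, whence $\epsilon_Z = \|\hZ^{\dagger}\|_2\|\bZ-\hZ\|_2 \le 2\|\bZ-\hZ\|_2$.

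The main obstacle is getting the constant in part~1 exactly right — specifically, deciding whether the bound should be $\|\bZ-\hZ\|_2$ (which the right-multiplication-by-$\bZ^T$ trick gives directly, since $\hZ\hZ^{\dagger}\bZ\bZ^T$ has the form of a projector times $\bZ\bZ^T$) or the stated $\|\hZ^{\dagger}\|_2\|\bZ-\hZ\|_2$, and then confirming $\|\hZ^{\dagger}\|_2 \ge 1$ so the stated form is never worse. The resolution is that $\hZ = \hZ\hZ^{\dagger}\hZ$ forces $1 = \|\hZ^{\dagger}\hZ\|_2 \le \|\hZ^{\dagger}\|_2\|\hZ\|_2$, which does \emph{not} directly give $\|\hZ^{\dagger}\|_2\ge 1$; so the honest version writes $\bZ\bZ^T - \hZ\hZ^{\dagger} = (\bZ - \hZ\hZ^{\dagger}\bZ)\bZ^T = (\bI-\hZ\hZ^{\dagger})(\bZ-\hZ)\bZ^T + [\hZ\hZ^{\dagger}(\hZ-\bZ)\text{ corrections}]$, and one tracks that the piece requiring $\hZ^{\dagger}$ is $\hZ\hZ^{\dagger}(\hZ-\bZ)$ projected suitably, bounded by $\|\hZ^{\dagger}\|_2\|\hZ-\bZ\|_2$ after factoring $\hZ\hZ^{\dagger} = \hZ(\hZ^T\hZ)^{-1}\hZ^T$. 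I expect the published proof (Theorem~\ref{t_perturbp}'s actual argument) to route through exactly such an identity, possibly citing a known bound like Wedin's for $\|\hZ^{\dagger}-\bZ^{\dagger}\|_2$ or the gap-free version thereof, rather than rederiving it.
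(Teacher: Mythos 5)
Your part 2 is exactly the paper's argument (Weyl's inequality on the singular values of $\hZ$ gives $\sigma_{\min}(\hZ)\geq 1/2$, hence full rank and $\|\hZ^{\dagger}\|_2\leq 2$), and your appeal to Lemma~\ref{l_projbasis} for the equality $\|\bZ\bZ^T-\hZ\hZ^{\dagger}\|_2=\|\sin{\bTheta}(\bZ,\hZ)\|_2$ is also what the paper does. The problem is the inequality in part 1. Your main derivation complements the \emph{perturbed} projector: from $(\bI-\hZ\hZ^{\dagger})\hZ=\bzero$ you get $\|(\bI-\hZ\hZ^{\dagger})\bZ\bZ^T\|_2\leq\|\bZ-\hZ\|_2$. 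That bound is true, but it does not imply the stated bound $\epsilon_Z=\|\hZ^{\dagger}\|_2\|\bZ-\hZ\|_2$, because $\|\hZ^{\dagger}\|_2$ can be strictly less than $1$: take $\hZ=\tfrac32\bZ$, so that $\|\bZ-\hZ\|_2=\tfrac12$ but $\|\hZ^{\dagger}\|_2=\tfrac23$ and $\epsilon_Z=\tfrac13<\tfrac12$. (This example even satisfies the hypothesis of part 2, so you cannot escape by restricting to that regime.) You correctly flag this as the sticking point, but your proposed resolution rests on the identity $\bZ\bZ^T-\hZ\hZ^{\dagger}=(\bZ-\hZ\hZ^{\dagger}\bZ)\bZ^T$, which is false as a matrix identity (it would require $\hZ\hZ^{\dagger}\bZ\bZ^T=\hZ\hZ^{\dagger}$), and the ``corrections'' are never pinned down, so the stated inequality is not actually established.

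The fix is a one-line switch of which projector gets complemented. Since $\rank(\hZ)=\rank(\bZ)$, Lemma~\ref{l_projbasis} gives $\|\sin{\bTheta}(\bZ,\hZ)\|_2=\|(\bI-\bZ\bZ^T)\,\hZ\hZ^{\dagger}\|_2$ (exact projector on the left). Now use $(\bI-\bZ\bZ^T)\bZ=\bzero$ instead of $(\bI-\hZ\hZ^{\dagger})\hZ=\bzero$: writing $\hZ=\bZ+(\hZ-\bZ)$,
\begin{equation*}
(\bI-\bZ\bZ^T)\,\hZ\hZ^{\dagger}=(\bI-\bZ\bZ^T)\,(\hZ-\bZ)\,\hZ^{\dagger},
\end{equation*}
and submultiplicativity with $\|\bI-\bZ\bZ^T\|_2\leq 1$ gives exactly $\|\hZ-\bZ\|_2\,\|\hZ^{\dagger}\|_2=\epsilon_Z$. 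This is the paper's proof; no Wedin-type pseudoinverse perturbation bound is needed. As a side remark, your bound $\|\sin{\bTheta}(\bZ,\hZ)\|_2\leq\|\bZ-\hZ\|_2$ is a correct and sometimes sharper companion estimate, but it is a different statement from the one you were asked to prove.
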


\begin{proof}
\begin{compactenum}
\item The equality follows from Lemma~\ref{l_projbasis}.
The upper bound follows from \cite[Theorem 3.1]{Ste2011} and \cite[Lemma 20.12]{Higham2002},
but we provide a simpler proof  for this context. Set $\hZ=\bZ+\bF$, and abbreviate
$\proj\equiv\bZ\bZ^T$ and $\projt\equiv\hZ\hZ^{\dagger}$.
Writing
$$(\bI-\proj)\projt=(\bI-\bZ\bZ^T)\hZ\hZ^{\dagger}=
(\bI-\bZ\bZ^T)(\bZ+\hZ-\bZ)\,\hZ^{\dagger}=(\bI-\proj)\bF\,\hZ^{\dagger}$$
gives
\begin{equation}\label{e_sin2}
\|\sin{\bTheta(\bZ,\hZ)}\|_2=\|(\bI-\proj)\projt\|_2\leq \|\hZ^{\dagger}\|_2\,\|\bF\|_2.
\end{equation}
\item To show $\rank(\hZ)=\rank(\bZ)$ in the special case $\|\bZ-\hZ\|_2\leq 1/2$, consider
the singular values $\sigma_j(\bZ)=1$  and $\sigma_j(\hZ)$,  $1\leq j\leq s$.
The well-conditioning of singular values \cite[Corollary~8.6.2]{GovL13} implies
$$\left| 1-\sigma_j(\hZ)\right| = \left| \sigma_j(\bZ)-\sigma_j(\hZ)\right| \leq \|\bF\|_2\leq 1/2,
\qquad 1\leq j\leq s.$$
Thus $\min_{1\leq j\leq s}{\sigma_j(\hZ)}\geq 1/2>0$ and $\rank(\projt)=\rank(\hZ)=s=\rank(\proj)$.
Hence (\ref{e_sin2}) holds with
\begin{equation}\label{e_sin3}
\|\sin{\bTheta(\bZ,\hZ)}\|_2\leq \|\hZ^{\dagger}\|_2\|\bF\|_2\leq 2\|\bF\|_2.
\end{equation}
\end{compactenum}
\end{proof}

Note that $\epsilon_Z$ represents both, an absolute and a relative perturbation as
$$\epsilon_Z =\ \|\hZ^{\dagger}\|_2\,\|\bZ-\hZ\|_2\  = \ \|\hZ\|_2\|\hZ^{\dagger}\|_2\>
\frac{\|\hZ-\bZ\|_2}{\|\hZ\|_2}.$$

\subsection{Approximation errors, and perturbations in the projector basis}\label{s_lrperturb}
We show that low-rank approximation errors are insensitive to additive, rank-preserving perturbations
in the projector basis (Theorem~\ref{t_perturbla}), if the perturbed projector basis is well-conditioned.

\begin{theorem}\label{t_perturbla}
Let $\bA\in\rmn$; let $\bZ\in\real^{m\times s}$ be a projector basis with orthonormal
columns so that $\bZ^T\bZ=\bI_s$; and let $\hZ\in\real^{m\times s}$ be its perturbation with
$\epsilon_Z \equiv\ \|\hZ^{\dagger}\|_2\,\|\bZ-\hZ\|_2$.
\begin{compactenum}
\item If $\rank(\hZ)=\rank(\bZ)$ then
\begin{eqnarray*}
\lefteqn{\spn{(\bI-\bZ\bZ^*)\bA} - \epsilon_Z\,\spn{\bA} \leq  \spn{(\bI-\hZ\hZ^{\dagger})\bA}}
\qquad\qquad\qquad\qquad\qquad\qquad\qquad\qquad\\
& \leq & \spn{(\bI-\bZ\bZ^*)\bA} + \epsilon_Z\,\spn{\bA}.
\end{eqnarray*}
\item If $\|\bZ-\hZ\|_2\leq 1/2$, then  $\rank(\hZ)=\rank(\bZ)$ and $\epsilon_Z\leq 2\, \|\bZ-\hZ\|_2$.
\end{compactenum}
\end{theorem}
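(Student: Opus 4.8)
The plan is to reduce the statement to Theorem~\ref{t_perturbp}, which already controls the distance $\|\bZ\bZ^T-\hZ\hZ^{\dagger}\|_2$ between the two subspaces. Abbreviate the orthogonal projectors $\proj\equiv\bZ\bZ^T$ and $\projt\equiv\hZ\hZ^{\dagger}$; under the hypothesis $\rank(\hZ)=\rank(\bZ)$ the matrix $\hZ$ has full column rank, so $\projt$ is the rank-$s$ orthogonal projector onto $\range(\hZ)$. The key observation is that the two residuals we must compare differ by a single, easily controlled term,
\[
(\bI-\projt)\bA-(\bI-\proj)\bA\ =\ (\proj-\projt)\bA,
\]
so rather than estimating each residual separately it suffices to bound $\spn{(\proj-\projt)\bA}$.

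For part~1 I would chain three inequalities. First, the triangle inequality for the norm $\spn{\cdot}$, in the form $\bigl|\spn{X}-\spn{Y}\bigr|\le\spn{X-Y}$, gives
\[
\Bigl|\,\spn{(\bI-\projt)\bA}-\spn{(\bI-\proj)\bA}\,\Bigr|\ \le\ \spn{(\proj-\projt)\bA}.
\]
Second, strong submultiplicativity of Schatten $p$-norms (Section~\ref{s_schatten}), applied with $\bA$ as the middle factor, peels off the projector difference:
\[
\spn{(\proj-\projt)\bA}\ \le\ \|\proj-\projt\|_2\,\spn{\bA}.
\]
Third, Theorem~\ref{t_perturbp}(1) --- which applies precisely because $\rank(\hZ)=\rank(\bZ)$ --- identifies $\|\proj-\projt\|_2=\|\sin{\bTheta(\bZ,\hZ)}\|_2$ and bounds it by $\epsilon_Z$. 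Combining the three and rearranging yields the claimed two-sided estimate. Part~2 is nothing new: it is exactly statement~2 of Theorem~\ref{t_perturbp}, whose short argument uses the well-conditioning of singular values to show $\sigma_j(\hZ)\ge1/2$ for $1\le j\le s$, hence $\rank(\hZ)=\rank(\bZ)=s$ and $\|\hZ^{\dagger}\|_2=1/\min_{1\le j\le s}\sigma_j(\hZ)\le2$, so $\epsilon_Z\le2\,\|\bZ-\hZ\|_2$.

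I do not expect a real obstacle here; the only thing to get right is the bookkeeping, and the one point worth flagging in the writeup is that strong submultiplicativity is genuinely needed --- plain submultiplicativity $\spn{XY}\le\spn{X}\spn{Y}$ would insert a spurious factor such as $(m-\rank\proj)^{1/p}$ through $\spn{\proj-\projt}$. One could instead expand $(\bI-\projt)\bA$ by writing $\bI=\proj+(\bI-\proj)$ and estimating the cross terms one at a time, using $(\bI-\proj)\projt=(\bI-\proj)(\hZ-\bZ)\hZ^{\dagger}$ and its mirror image; that route also works but is messier and hides the symmetry between the two directions of the bound. The compact argument above, treating the difference of residuals as a single object and recognizing $\|\proj-\projt\|_2$ as the sine of the largest principal angle, makes transparent why the same quantity $\epsilon_Z\,\spn{\bA}$ governs the error in both directions.
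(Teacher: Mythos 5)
Your proposal is correct and follows essentially the same route as the paper: write $(\bI-\projt)\bA=(\bI-\proj)\bA+(\proj-\projt)\bA$, apply the triangle and reverse triangle inequalities together with strong submultiplicativity, and bound $\|\proj-\projt\|_2\leq\epsilon_Z$ via Theorem~\ref{t_perturbp}, with part~2 inherited directly from that theorem. No gaps.
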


\begin{proof}
This is a straightforward consequence of Theorem~\ref{t_perturbp}.

Abbreviate $\proj\equiv \bZ\bZ^T$ and $\projt\equiv\hZ\hZ^{\dagger}$, and write
\begin{equation*}
(\bI-\projt)\bA = (\bI-\proj)\bA +(\proj-\projt)\bA.
\end{equation*}
Apply the triangle and reverse triangle inequalities, strong submultiplicativity,  and
then bound the second summand with Theorem~\ref{t_perturbp},
\begin{equation*}
\spn{(\proj-\projt)\bA}\leq\|\hZ^{\dagger}\|_2\>\|\hZ-\bZ\|_2\spn{\bA} = \epsilon_Z\,\spn{\bA}.
\end{equation*}
\end{proof}

\subsection{Approximation errors, and perturbations in the matrix}\label{s_lrmperturb}
We show that low-rank approximation errors  are insensitive to matrix perturbations that are additive 
(Theorem~\ref{t_perturbm} and Corollary~\ref{c_cba}), and that are dimension changing
(Theorem~\ref{t_perturbmm})

\begin{theorem}[Additive perturbations]\label{t_perturbm}
Let $\bA,\bE\in\rmn$; and let $\proj\in\rmm$ be an orthogonal projector with $\proj^2=\proj=\proj^T$.
Then
\begin{equation*}
\spn{(\bI-\proj)\bA}- \spn{\bE} \leq
\spn{(\bI-\proj)(\bA+\bE)}  \leq \spn{(\bI-\proj)\bA} + \spn{\bE}.
\end{equation*}
\end{theorem}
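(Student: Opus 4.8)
The plan is to prove the statement by writing the perturbed low-rank approximation error as the sum of the unperturbed error plus a correction term, and then bounding the correction term directly. Specifically, I would start from the identity
\begin{equation*}
(\bI-\proj)(\bA+\bE) = (\bI-\proj)\bA + (\bI-\proj)\bE,
\end{equation*}
which is immediate from linearity of matrix multiplication and requires no assumptions on $\proj$ beyond it being a well-defined matrix of the right size.

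Next I would apply the triangle inequality and the reverse triangle inequality for the Schatten $p$-norm (both valid since $\spn{\cdot}$ is a genuine norm) to the right-hand side, obtaining
\begin{equation*}
\bigg|\, \spn{(\bI-\proj)(\bA+\bE)} - \spn{(\bI-\proj)\bA}\, \bigg| \ \leq\ \spn{(\bI-\proj)\bE}.
\end{equation*}
The only remaining task is to bound $\spn{(\bI-\proj)\bE}$ by $\spn{\bE}$. For this I would invoke strong submultiplicativity (the symmetric-norm property stated in the Lemma in Section~\ref{s_schatten}) with $\bC = \bI-\proj$, together with the fact that $\bI-\proj$ is itself an orthogonal projector when $\proj$ is (it is clearly symmetric, and $(\bI-\proj)^2 = \bI - 2\proj + \proj^2 = \bI-\proj$), so that $\|\bI-\proj\|_2 = \sigma_1(\bI-\proj) \leq 1$. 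Hence $\spn{(\bI-\proj)\bE} \leq \|\bI-\proj\|_2\,\spn{\bE} \leq \spn{\bE}$, and rewriting the absolute-value inequality as the two-sided chain gives exactly the claimed bound.

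There is essentially no serious obstacle here; the result is a direct consequence of the norm axioms plus the contractivity of orthogonal projectors. The one point that deserves a word of care is making explicit why $\|\bI-\proj\|_2\leq 1$ — namely that the eigenvalues of an orthogonal projector lie in $\{0,1\}$, so the same is true of $\bI-\proj$, and since it is symmetric its two-norm equals its spectral radius, which is at most $1$. (Equivalently, one can cite the two-norm expression in Lemma~\ref{l_projbasis}, or simply note $\|(\bI-\proj)x\|_2 \le \|x\|_2$ via the Pythagorean decomposition $x = \proj x + (\bI-\proj)x$ with orthogonal summands.) Everything else is routine, and the same argument yields the matrix-dimension-changing and angle-relation results later by combining it with the appropriate algebraic identities.
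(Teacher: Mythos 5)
Your proof is correct and is essentially identical to the paper's own argument: both decompose $(\bI-\proj)(\bA+\bE)$ linearly, apply the triangle and reverse triangle inequalities, and use strong submultiplicativity together with $\|\bI-\proj\|_2\leq 1$ to absorb the projector. The extra justification you give for $\|\bI-\proj\|_2\leq 1$ is a welcome bit of explicitness but does not change the route.
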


\begin{proof}
Apply the triangle and reverse triangle inequalities, and the fact that an orthogonal projector has at most unit norm,
$\|\bI-\proj\|_2\leq 1$.
\end{proof}

\begin{corollary}[Low-rank approximation from singular vectors of $\bA+\bE$]\label{c_cba}
Let $\bU_k\in\real^{m\times k}$ in (\ref{e_bestrank}) be $k$ dominant left singular vectors of $\bA$;
and let $\hU_k\in\real^{m\times k}$ be $k$ dominant left singular vectors of $\bA+\bE$.
Then
$$\|(\bI-\bU_k\bU_k^T)\bA\|_2 \ \leq\ 
\|(\bI-\hU_k \hU_k^T)\bA\|_{2}\, \ \leq\ \|(\bI-\bU_k\bU_k^T)\bA\|_2 + 2\|\bE\|_{2}.$$
\end{corollary}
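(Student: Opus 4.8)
The plan is to derive Corollary~\ref{c_cba} as a direct application of the additive matrix perturbation bound, Theorem~\ref{t_perturbm}, applied with the projector $\proj\equiv\hU_k\hU_k^T$, which is an orthogonal projector onto a dominant $k$-dimensional subspace of $\bA+\bE$. The upper bound follows by writing $(\bI-\hU_k\hU_k^T)\bA=(\bI-\hU_k\hU_k^T)(\bA+\bE)-(\bI-\hU_k\hU_k^T)\bE$, bounding the second term by $\|\bE\|_2$ (since $\|\bI-\hU_k\hU_k^T\|_2\le 1$), and then bounding $\|(\bI-\hU_k\hU_k^T)(\bA+\bE)\|_2$. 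The key observation is that $\hU_k\hU_k^T$ is \emph{optimal} for $\bA+\bE$: by the best rank-$k$ approximation property, $\|(\bI-\hU_k\hU_k^T)(\bA+\bE)\|_2=\sigma_{k+1}(\bA+\bE)\le\|(\bI-\bU_k\bU_k^T)(\bA+\bE)\|_2$. Applying Theorem~\ref{t_perturbm} once more (now with $\proj=\bU_k\bU_k^T$) gives $\|(\bI-\bU_k\bU_k^T)(\bA+\bE)\|_2\le\|(\bI-\bU_k\bU_k^T)\bA\|_2+\|\bE\|_2$. Chaining these yields the upper bound $\|(\bI-\bU_k\bU_k^T)\bA\|_2+2\|\bE\|_2$.

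For the lower bound $\|(\bI-\bU_k\bU_k^T)\bA\|_2\le\|(\bI-\hU_k\hU_k^T)\bA\|_2$, I would invoke the best rank-$k$ approximation property directly: since $\bU_k\bU_k^T$ minimizes $\|(\bI-\proj)\bA\|_2$ over all rank-$k$ orthogonal projectors $\proj$ (equivalently, $\bU_k\bU_k^T\bA=\bA_k$ is a best rank-$k$ approximation of $\bA$, so $\|(\bI-\bU_k\bU_k^T)\bA\|_2=\sigma_{k+1}(\bA)=\min_{\rank(\bB)=k}\|\bA-\bB\|_2$), and since $\hU_k\hU_k^T\bA$ has rank at most $k$, we immediately get $\|(\bI-\bU_k\bU_k^T)\bA\|_2\le\|(\bI-\hU_k\hU_k^T)\bA\|_2$. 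No perturbation argument is needed here at all.

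I expect no serious obstacle: the entire argument is a two-line combination of the best rank-$k$ approximation property with two applications of Theorem~\ref{t_perturbm}. The one point requiring a word of care is the step $\|(\bI-\hU_k\hU_k^T)(\bA+\bE)\|_2\le\|(\bI-\bU_k\bU_k^T)(\bA+\bE)\|_2$, which uses that $\hU_k$ consists of dominant singular vectors of $\bA+\bE$ (so $(\bI-\hU_k\hU_k^T)(\bA+\bE)$ realizes the best rank-$k$ error for $\bA+\bE$) rather than of $\bA$; this is exactly why the corollary is stated with $\hU_k$ taken from an SVD of $\bA+\bE$. Since the statement is confined to the two-norm, I do not need the Schatten $p$-norm machinery, though the identical argument would go through for any Schatten $p$-norm using the best rank-$k$ property stated in the Lemma in Section~\ref{s_schatten}.
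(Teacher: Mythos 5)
Your proof is correct, and it reaches the result by a mildly different route than the paper. Both arguments begin identically: apply Theorem~\ref{t_perturbm} with $\proj=\hU_k\hU_k^T$ and use that $\hU_k$ consists of dominant singular vectors of $\bA+\bE$, so that $\|(\bI-\hU_k\hU_k^T)(\bA+\bE)\|_2=\sigma_{k+1}(\bA+\bE)$. They diverge at the next step: the paper bounds $\sigma_{k+1}(\bA+\bE)\leq\sigma_{k+1}(\bA)+\|\bE\|_2$ by citing Weyl's theorem, whereas you re-derive exactly this inequality from the best rank-$k$ approximation property, namely $\sigma_{k+1}(\bA+\bE)\leq\|(\bI-\bU_k\bU_k^T)(\bA+\bE)\|_2$, followed by a second application of Theorem~\ref{t_perturbm} with $\proj=\bU_k\bU_k^T$. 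The two are mathematically equivalent---your chain is in fact a standard proof of the relevant half of Weyl's inequality for $\sigma_{k+1}$---so what your version buys is self-containedness (no appeal to an external singular value perturbation theorem), at the cost of one extra invocation of Theorem~\ref{t_perturbm}. You also spell out the lower bound $\|(\bI-\bU_k\bU_k^T)\bA\|_2\leq\|(\bI-\hU_k\hU_k^T)\bA\|_2$ via the optimality of $\bU_k\bU_k^T\bA$ among rank-$k$ approximants of $\bA$, which the paper's proof leaves implicit in its ``combine the bounds'' step; that is a worthwhile addition, and your closing remark that the argument extends to any Schatten $p$-norm is consistent with the best rank-$k$ property stated in Section~\ref{s_schatten}.
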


\begin{proof}
Setting  $\proj=  \hU_k \hU_k^T$ in the upper bound of Theorem~\ref{t_perturbm} gives 
\[\|(\bI-\hU_k \hU_k^T)\bA\|_{2}\, \ \leq\ \|(\bI-\hU_k\hU_k^T)\, (\bA+\bE)\|_{2} + \|\bE\|_{2}.\]
Express the approximation errors in terms of singular values,
$$\|(\bI-\hU_k \hU_k^T)\, (\bA+\bE)\|_2=\sigma_{k+1}(\bA+\bE), \qquad
\|(\bI-\bU_k\bU_k^T)\bA\|_2 =\sigma_{k+1}(\bA),$$
apply Weyl's theorem 
\[|\sigma_{k+1}(\bA+\bE)-\sigma_{k+1}(\bA)| \leq \|\bE\|_2,\]
and combine the bounds.
\end{proof}

\begin{theorem}[Perturbations that change the number of columns]\label{t_perturbmm}
Let $\bA\in\rmn$; $\wA\in\real^{m\times c}$; let $\proj\in\rmm$ be an orthogonal projector
as in (\ref{e_dproj}) and $\rank(\proj)=s$; and let $p\geq 1$ an even integer. Then
\begin{compactenum}
\item Two norm $(p=\infty)$
$$\bigg|\|(\bI-\proj)\,\bA\|_{2}^2-  \|(\bI-\proj)\wA\|_{2}^2\bigg|  \ \leq \  \|\wA\wA^T-\bA\bA^T\|_2.$$
\item Schatten $p$ norm $(p$ even$)$
\begin{eqnarray*}
\lefteqn{\bigg|\spn{(\bI-\proj)\,\bA}^2-  \spn{(\bI-\proj)\wA}^2\bigg| \ \leq}\\
& &\ \min\bigg\{ \spnt{\wA\wA^T-\bA\bA^T}, \quad \sqrt[p]{m-s}\,\spn{\wA\wA^T-\bA\bA^T}\bigg\}.
\end{eqnarray*}
\item Frobenius norm  $(p=2)$
\begin{eqnarray*}
\lefteqn{\bigg|\|(\bI-\proj)\,\wA\|_F^2  -\|(\bI-\proj)\,\bA\|_F^2\bigg| \  \leq}\\
& &\ \min\bigg\{ \|\wA\wA^T-\bA\bA^T\|_*, \quad \sqrt{m-s}\,\|\wA\wA^T-\bA\bA^T\|_F\bigg\}.
\end{eqnarray*}
\end{compactenum}
\end{theorem}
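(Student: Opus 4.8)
The plan is to reduce the claim, for all three norms at once, to a single identity: the \emph{squared} low-rank approximation error $\spn{(\bI-\proj)\bA}^2$ equals the Schatten $(p/2)$-norm of the compression of the Gram matrix $\bA\bA^T$ onto $\range(\bI-\proj)$. Concretely, I claim that
\begin{equation*}
\spn{(\bI-\proj)\,\bA}^2 \;=\; \spnt{(\bI-\proj)\,\bA\bA^T\,(\bI-\proj)},
\end{equation*}
and likewise with $\wA$ in place of $\bA$. To see this, note that $\bI-\proj$ is itself an orthogonal projector, so $(\bI-\proj)\,\bA\bA^T\,(\bI-\proj)=\big((\bI-\proj)\bA\big)\big((\bI-\proj)\bA\big)^T$ is symmetric positive semidefinite; hence its $j$th singular value equals $\sigma_j\big((\bI-\proj)\bA\big)^2$. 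Since $p$ is even, $p/2$ is a positive integer and $\spnt{\cdot}$ is a genuine Schatten norm; summing the $(p/2)$th powers of these singular values therefore gives $\spnt{(\bI-\proj)\bA\bA^T(\bI-\proj)}^{p/2}=\spn{(\bI-\proj)\bA}^p$, which is the claimed identity after taking $(p/2)$th roots. The value of this reformulation is that the error depends on $\bA$ only through $\bA\bA^T$ --- which is precisely why no relationship between $\range(\bA)$ and $\range(\wA)$, and in particular no matching of the numbers of columns, is ever needed.

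Granting the identity, the bound follows in a few routine steps. By the identity and the reverse triangle inequality for the Schatten $(p/2)$-norm, the left-hand side of the asserted bound is at most $\spnt{(\bI-\proj)\,(\wA\wA^T-\bA\bA^T)\,(\bI-\proj)}$. I then bound this quantity in two ways. First, strong submultiplicativity (using $\|\bI-\proj\|_2\le1$) yields the estimate $\spnt{\wA\wA^T-\bA\bA^T}$. Second, the matrix $(\bI-\proj)\,(\wA\wA^T-\bA\bA^T)\,(\bI-\proj)$ has its range inside $\range(\bI-\proj)$, hence at most $m-s$ nonzero singular values; Hölder's inequality applied to those singular values introduces a factor $\sqrt[p]{m-s}$ (the exponent being $\tfrac1{p/2}-\tfrac1p=\tfrac1p$), after which strong submultiplicativity again gives the estimate $\sqrt[p]{m-s}\,\spn{\wA\wA^T-\bA\bA^T}$. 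Taking the minimum of the two estimates proves part~2, with the Frobenius statement (part~3) being the case $p=2$. Part~1 is the limit $p=\infty$, where $\spnt{\cdot}=\spn{\cdot}=\|\cdot\|_2$ and $\sqrt[\infty]{m-s}=1$, so the minimum collapses to $\|\wA\wA^T-\bA\bA^T\|_2$; it can also be obtained directly from $\|(\bI-\proj)\bA\|_2^2=\|(\bI-\proj)\bA\bA^T(\bI-\proj)\|_2$ via the reverse triangle inequality and strong submultiplicativity.

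The only real content is the identity in the first paragraph; everything after it is the triangle inequality together with strong submultiplicativity (already among the Schatten-norm properties recorded in the paper) and an elementary Hölder bound for the low-rank matrix $(\bI-\proj)\,(\wA\wA^T-\bA\bA^T)\,(\bI-\proj)$. I expect the step deserving the most care is recognizing the right object --- that it is the \emph{square} of the error norm, not the error norm itself, that is linear in $\bA\bA^T$ --- and, in the second bound, correctly tracking the rank $m-s$ so as to arrive at the factor $\sqrt[p]{m-s}$ multiplying the Schatten $p$-norm (rather than the Schatten $(p/2)$-norm).
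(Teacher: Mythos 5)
Your proof is correct and follows essentially the same route as the paper's: the Q-norm identity $\spn{\bM}^2=\spnt{\bM\bM^T}$, the (reverse) triangle inequality in the Schatten $(p/2)$-norm, strong submultiplicativity for the first term of the minimum, and a rank-$(m-s)$ plus H\"older/Cauchy--Schwarz argument for the factor $\sqrt[p]{m-s}$. The only cosmetic difference is that the paper bounds $\sigma_j\bigl((\bI-\proj)\bM(\bI-\proj)\bigr)\le\sigma_j(\bM)$ termwise before applying Cauchy--Schwarz, whereas you invoke the norm comparison on at most $m-s$ singular values first and then strong submultiplicativity; the two orderings are equivalent.
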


\begin{proof}
The proof is motivated by that of  \cite[Theorems 2 and 3]{DKM06}.
The bounds are obvious for $s=m$ where $\proj=\bI_m$, so assume $s<m$.

\paragraph{1. Two-norm}
The invariance of the two norm under transposition and the triangle inequality imply
\begin{eqnarray*}
\|(\bI-\proj)\wA\|_{2}^2&=&\|\wA^T(\bI-\proj)\|_{2}^2=\|(\bI-\proj)\wA\wA^T(\bI-\proj)\|_{2}\\
&=&\|(\bI-\proj)\bA\bA^T(\bI-\proj)\ +\ (\bI-\proj)\left(\wA\wA^T-\bA\bA^T\right)(\bI-\proj)\|_{2}\\
&\leq &\|(\bI-\proj)\bA\bA^T(\bI-\proj)\|_{2} +\|(\bI-\proj)\left(\wA\wA^T-\bA\bA^T\right)(\bI-\proj)\|_{2}.
\end{eqnarray*}
The first summand equals
$$\|(\bI-\proj)\bA\bA^T(\bI-\proj)\|_{2} =\|(\bI-\proj)\bA\|_{2}^2,$$
while the second one is bounded by submultiplicativity and $\|\bI-\proj\|_2\leq 1$,
\begin{eqnarray*}
\|(\bI-\proj)\left(\wA\wA^T-\bA\bA^T\right)(\bI-\proj)\|_{2}
&\leq& \|\bI-\proj\|_2^2\,\|\wA\wA^T-\bA\bA^T\|_{2}\\
&\leq& \|\wA\wA^T-\bA\bA^T\|_{2}.
\end{eqnarray*}
This gives the upper bound
$$\|(\bI-\proj)\wA\|_{2}^2-\|(\bI-\proj)\bA\|_{2}^2\leq \|\wA\wA^T-\bA\bA^T\|_{2}.$$
Apply the inverse triangle inequality to show the lower bound,
$$-\|\wA\wA^T-\bA\bA^T\|_{2}\leq \|(\bI-\proj)\wA\|_{2}^2-\|(\bI-\proj)\bA\|_{2}^2.$$

\paragraph{2. Schatten $p$-norm $(p$ even$)$}
The proof is similar to that of the two-norm, since  an even Schatten $p$-norm
is a \textit{Q-norm} \cite[Definition IV.2.9]{Bhatia1997}, that is, it represents  a quadratic gauge function.
This can be seen in terms of singular values,
$$\spn{\bM}^{p}=\sum_{j}{\left(\sigma_j(\bM)\right)^{p}}
=\sum_{j}{\left(\sigma_j(\bM\bM^T)\right)^{p/2}}=\spnt{\bM\bM^T}^{p/2}.$$
Hence
\begin{equation}\label{e_qnorm}
\spn{\bM}^2\ = \ \spnt{\bM\bM^T}.
\end{equation}
Abbreviate $\bM\equiv \wA\wA^T-\bA\bA^T$, and $\bB\equiv \bI-\proj$ where $\bB^T=\bB$ and $\|\bB\|_2=1$.
Since singular values do not change under transposition, 
it follows from (\ref{e_qnorm}) and the triangle inequality that
\begin{eqnarray*}
\spn{\bB\,\wA}^2&=&\spn{\wA^T\,\bB}^2=\spnt{\bB\,\wA\wA^T\,\bB}
=\spnt{\bB\bA\bA^T\bB\ +\ \bB\,\bM\,\bB}\\
&\leq &\spnt{\bB\,\bA\bA^T\,\bB} +\spnt{\bB\,\bM\,\bB}.
\end{eqnarray*}
Apply (\ref{e_qnorm}) to the first summand, 
$\ \spnt{\bB\,\bA\bA^T\,\bB} =\spn{\bB\,\bA}^2$,
and insert it into the above inequalities,
\begin{equation}\label{e_24}
\spn{\bB\,\wA}^2-\spn{\bB\,\bA}^2\leq\spnt{\bB\,\bM\,\bB}.
\end{equation}
\begin{compactenum}
\item First term in the minimum:
Bound (\ref{e_24}) with strong submultiplicativity and $\|\bB\|_2= 1$,
\begin{eqnarray*}
\spn{\bB\,\bM\,\bB}\leq \|\bB\|_2^2\,\spn{\bM}\leq \spn{\bM},
\end{eqnarray*}
which gives the upper bound
$$\spn{\bB\,\wA}^2-\spn{\bB\,\bA}^2\leq \spn{\bM}.$$
Apply the inverse triangle inequality to show the lower bound
$$-\spn{\bM}\leq \spn{\bB\,\wA}^2-\spn{\bB\,\bA}^2.$$
\item Second term in the minimum: From
$$\rank\left(\bB\,\bM\,\bB\right)\leq \rank(\bB)=\rank(\bI-\proj)=m-s>0$$
follows $\sigma_j(\bB)=1$, $\leq j\leq m-s$.
With the non-ascending singular value ordering in (\ref{e_svo}),
the Schatten $p$-norm needs to sum over only the largest $m-s$ singular values.
This, together with the singular value inequality \cite[(7.3.14)]{HoJ12}
$$\sigma_j(\bB\,\bM\,\bB)\leq \sigma_1(\bB)^2\,\sigma_j(\bM) = 1\cdot \,\sigma_1(\bM), \qquad 1\leq j\leq m-s, $$
gives
$$\spnt{\bB\,\bM\,\bB}^{p/2}=\sum_{j=1}^{m-s}{\left(\sigma_j(\bB\,\bM\,\bB)\right)^{p/2}}
\leq \sum_{j=1}^{m-s}{1\cdot\left(\sigma_j(\bM)\right)^{p/2}}.$$
At last apply the Cauchy-Schwartz inequality to the vectors of singular values
$$\sum_{j=1}^{m-s}{1\cdot \left(\sigma_j(\bM)\right)^{p/2}}\leq \sqrt{m-s}\,
\sqrt{\sum_{j=1}^{m-s}{\left(\sigma_j(\bM)\right)^p}}\leq \sqrt{m-s}\,\spn{\bM}^{p/2}.$$
Merging the last two sequences of inequalities gives
$$\spnt{\bB\,\bM\,\bB}^{p/2}\leq \sqrt{m-s}\,\spn{\bM}^{p/2}.$$
Thus $\spnt{\bB\,\bM\,\bB}\leq \sqrt[p]{m-s}\,\spn{\bM}$, which can now be substituted into~(\ref{e_24}).
\end{compactenum}
\paragraph{3. Frobenius norm}
This is the special case $p=2$ with $\stn{\bA}=\|\bA\|_F$ and $\son{\bA}=\|\bA\|_*$.
\end{proof}

\subsection{Approximation error, and  error matrix}\label{s_errmatrix}
We generalize  \cite[Theorems 2 and 3]{DKM06} to Schatten $p$-norms.

\begin{theorem}\label{t_lc}
Let $\bA\in\rmn$ with $\rank(\bA)\geq k$; $\bC\in\real^{m\times c}$ with $\rank(\bC)=c\geq k$;
and let $p\geq 1$ be an even integer. Then
\begin{compactenum}
\item Two-norm $(p=\infty)$
$$\|(\bI-\bC\bC^{\dagger})\,\bA\|_{2}^2 \ \leq\  \|\bA\bA^T -\bC\bC^T\|_{2}.$$
\item Schatten $p$-norm $(p$ even$)$
$$\spn{(\bI-\bC\bC^{\dagger})\,\bA}^2 \ \leq\
\ \min\bigg\{ \spnt{\bA\bA^T-\bC\bC^T}, \quad \sqrt[p]{m-c}\,\spn{\bA\bA^T-\bC\bC^T}\bigg\}.$$
\item Frobenius norm $(p=2)$
$$\|(\bI-\bC\bC^{\dagger})\,\bA\|_{F}^2 \ \leq\
\ \min\bigg\{ \|\bA\bA^T-\bC\bC^T\|_*, \quad \sqrt{m-c}\,\|\bA\bA^T-\bC\bC^T\|_F\bigg\}.$$
\end{compactenum}
\end{theorem}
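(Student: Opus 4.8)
The plan is to obtain all three bounds as an immediate specialization of Theorem~\ref{t_perturbmm}, using the fact that $\bC\bC^{\dagger}$ is the orthogonal projector onto $\range(\bC)$. First I would record that $\proj\equiv\bC\bC^{\dagger}$ satisfies $\proj^2=\proj=\proj^T$, so it is an orthogonal projector in the sense of Definition~\ref{d_proj}; and since $\bC$ has full column rank $\rank(\bC)=c$, we have $\rank(\proj)=\rank(\bC)=c$. Then I would invoke Theorem~\ref{t_perturbmm} with $\wA\equiv\bC\in\real^{m\times c}$ and this $\proj$, so that the rank parameter there is $s=\rank(\proj)=c$ and hence $m-s=m-c$. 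This yields, for even $p$,
\begin{equation*}
\bigg|\spn{(\bI-\proj)\,\bA}^2-\spn{(\bI-\proj)\,\bC}^2\bigg| \ \leq\ \min\bigg\{\spnt{\bC\bC^T-\bA\bA^T},\ \sqrt[p]{m-c}\,\spn{\bC\bC^T-\bA\bA^T}\bigg\},
\end{equation*}
together with the corresponding two-norm and Frobenius-norm statements of Theorem~\ref{t_perturbmm}.

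The key simplification is that $(\bI-\proj)\,\bC=\bC-\bC\bC^{\dagger}\bC=\bC-\bC=\bzero$, because $\bC\bC^{\dagger}\bC=\bC$ for the Moore--Penrose inverse. Hence $\spn{(\bI-\proj)\,\bC}^2=0$, the left-hand side collapses to $\spn{(\bI-\bC\bC^{\dagger})\,\bA}^2$, and the absolute value becomes superfluous. Finally, every Schatten $p$-norm depends only on singular values and is therefore invariant under negation of its argument, so $\spn{\bC\bC^T-\bA\bA^T}=\spn{\bA\bA^T-\bC\bC^T}$ and likewise for $\spnt{\cdot}$ and the two- and Frobenius norms; substituting gives exactly the three claimed inequalities, with part~1 the case $p=\infty$, part~3 the case $p=2$, and part~2 the general even~$p$.

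There is no genuine obstacle here beyond bookkeeping: the only points needing a line of justification are that $\bC\bC^{\dagger}$ really is an orthogonal projector, that its rank equals $c$ (which is where the full-column-rank hypothesis $\rank(\bC)=c$ enters, so that the bound involves $m-c$ rather than $m-\rank(\bC)$), and that $(\bI-\proj)\bC=\bzero$. Everything else is a direct substitution into Theorem~\ref{t_perturbmm}, so the proof is essentially one sentence of setup followed by this vanishing observation.
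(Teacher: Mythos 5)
Your proposal is correct and follows exactly the paper's own route: apply Theorem~\ref{t_perturbmm} with $\proj=\bC\bC^{\dagger}$, $\wA=\bC$, and observe that $(\bI-\proj)\,\bC=\bC-\bC\bC^{\dagger}\bC=\bzero$ so the perturbed term vanishes. The extra remarks about $\rank(\proj)=c$ and the sign-invariance of Schatten norms are correct bookkeeping that the paper leaves implicit.
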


\begin{proof}
This follows from Theorem~\ref{t_perturbmm} with $\proj=\bC\bC^{\dagger}$, $\rank(\proj)=c$, $\wA=\bC$, and
$$(\bI-\proj)\,\wA=(\bI-\bC\bC^{\dagger})\,\bC=\bC=\bC\bC^{\dagger}\bC=\bzero.$$
\end{proof}

Recall
\textit{Mirsky's Theorem} \cite[Corollary 7.4.9.3]{HoJ12}, an extension of the Hoffman-Wielandt theorem
to any unitarily invariant norm and, in particular, Schatten $p$-norms: for $\bA,\bH\in\rmn$, the singular values $\sigma_j(\bA\bA^T)$ and $\sigma_j(\bH\bH^T)$, $1\leq j\leq m$
are also eigenvalues and satisfy
\begin{equation}\label{e_M}
\sum_{j=1}^m{|\sigma_j(\bA\bA^T)-\sigma_j(\bH\bH^T)|^p}\leq \spn{\bA\bA^T-\bH\bH^T}^p.
\end{equation}

\begin{theorem}\label{t_lck}
Let $\bA\in\rmn$ with $\rank(\bA)\geq k$; let $\bC\in\real^{m\times c}$ with $\rank(\bC)=c\geq k$
and best rank-$k$ approximation $\bC_k$; and let $p\geq 1$ be an even integer. Then
\begin{compactenum}
\item Two-norm $(p=\infty)$
$$\|(\bI-\bC_k\bC_k^{\dagger})\,\bA\|_{2}^2 \ \leq\  \|\bA-\bA_k\|_2^2+ 2\,\|\bA\bA^T -\bC\bC^T\|_{2}.$$
\item Schatten $p$-norm $(p$ even$)$
\begin{eqnarray*}
\spn{(\bI-\bC_k\bC_k^{\dagger})\,\bA}^2  &\leq& \spn{\bA-\bA_k}^2+\\
&& 2\,\min\big\{ \spnt{\bA\bA^T-\bC\bC^T}, \, \sqrt[p]{m-c}\,\spn{\bA\bA^T-\bC\bC^T}\big\}.
\end{eqnarray*}
\item Frobenius norm $(p=2)$
\begin{eqnarray*}
\|(\bI-\bC_k\bC_k^{\dagger})\,\bA\|_{F}^2 \ &\leq&\ \|\bA-\bA_k\|_F^2+\\
&&2\, \min\big\{ \|\bA\bA^T-\bC\bC^T\|_*, \, \sqrt{m-c}\,\|\bA\bA^T-\bC\bC^T\|_F\big\}.
\end{eqnarray*}
\end{compactenum}
\end{theorem}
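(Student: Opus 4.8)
The plan is to reduce Theorem~\ref{t_lck} to the already-established Theorem~\ref{t_lc} together with Mirsky's theorem~(\ref{e_M}), following the template of \cite[Theorems 2 and 3]{DKM06}. The central idea is to compare the projector $\bC_k\bC_k^{\dagger}$ onto the dominant $k$-dimensional subspace of $\bC$ with the projector $\bC\bC^{\dagger}$ onto the full column space of $\bC$, and then to absorb the difference into the two intrinsic error terms $\spn{\bA-\bA_k}$ and $\spn{\bA\bA^T-\bC\bC^T}$.

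\medskip
\noindent\textbf{Step 1: Split off the dominant part of $\bC$.}
First I would write $(\bI-\bC_k\bC_k^{\dagger})\,\bA = (\bI-\bC\bC^{\dagger})\,\bA + (\bC\bC^{\dagger}-\bC_k\bC_k^{\dagger})\,\bA$, using that $\range(\bC_k)\subseteq\range(\bC)$ so that $\bC\bC^{\dagger}-\bC_k\bC_k^{\dagger}$ is itself an orthogonal projector of rank $c-k$. Since these two projectors have orthogonal ranges, for a $Q$-norm the squared norm splits additively (or one uses the triangle inequality together with the $Q$-norm identity $\spn{\bM}^2=\spnt{\bM\bM^T}$ from~(\ref{e_qnorm})): I expect
\[
\spn{(\bI-\bC_k\bC_k^{\dagger})\,\bA}^2 \leq \spn{(\bI-\bC\bC^{\dagger})\,\bA}^2 + \spn{(\bC\bC^{\dagger}-\bC_k\bC_k^{\dagger})\,\bA}^2.
\]
The first summand is bounded directly by Theorem~\ref{t_lc}, giving the $\min\{\cdot,\cdot\}$ term.

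\medskip
\noindent\textbf{Step 2: Bound the middle term via $\bC$ in place of $\bA$.}
For the second summand I would replace $\bA$ by $\bC$ at the cost of another perturbation: $(\bC\bC^{\dagger}-\bC_k\bC_k^{\dagger})\,\bA = (\bC\bC^{\dagger}-\bC_k\bC_k^{\dagger})(\bA-\bC) + (\bC\bC^{\dagger}-\bC_k\bC_k^{\dagger})\,\bC$. Here $(\bC\bC^{\dagger}-\bC_k\bC_k^{\dagger})\,\bC = \bC - \bC_k$ has squared $Q$-norm equal to $\sum_{j=k+1}^{c}\sigma_j(\bC)^2$ (more generally $\sum_{j>k}\sigma_j(\bC)^p$ under the $p/2$ power), and this is what will ultimately be matched against $\sum_{j>k}\sigma_j(\bA)^p = \spn{\bA-\bA_k}^p$. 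The genuinely delicate point — and the step I expect to be the main obstacle — is controlling $\sum_{j>k}\sigma_j(\bC)^p$ in terms of $\spn{\bA-\bA_k}^p$ plus $\spn{\bA\bA^T-\bC\bC^T}$: this is exactly where Mirsky's theorem~(\ref{e_M}) enters, since $\sigma_j(\bC)^2=\sigma_j(\bC\bC^T)$ and $\sigma_j(\bA)^2=\sigma_j(\bA\bA^T)$, so $\sum_{j>k}|\sigma_j(\bA\bA^T)-\sigma_j(\bC\bC^T)|^{p/2}$ is bounded, up to Cauchy--Schwartz as in Theorem~\ref{t_perturbmm}, by the two forms of $\spn{\bA\bA^T-\bC\bC^T}$. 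Getting the clean constant $2$ out front (rather than something larger) will require being careful about whether one telescopes at the level of $p$th powers of $\sigma_j$ or of $\sigma_j^2$, and about the elementary inequality $(a+b)^{p/2}$ versus $a^{p/2}+b^{p/2}$; I anticipate that working throughout with the $Q$-norm identity $\spn{\cdot}^2 = \spnt{\cdot\,(\cdot)^T}$ and the monotone rearrangement of eigenvalues of $\bA\bA^T$ and $\bC\bC^T$ keeps the bookkeeping linear, so that the two perturbation contributions combine additively into the factor $2$.

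\medskip
\noindent\textbf{Step 3: Assemble and specialize.}
Finally I would collect the three pieces — the $\min\{\cdot,\cdot\}$ term from Step~1, the $\spn{\bA-\bA_k}^2$ term, and the second copy of the $\min\{\cdot,\cdot\}$ term from Step~2 — into the stated Schatten $p$-norm bound, then read off the two-norm case as $p=\infty$ (replacing $\sqrt[p]{m-c}$ by $1$ and the nuclear-norm alternative becoming vacuous) and the Frobenius case as $p=2$ with $\stn{\cdot}=\|\cdot\|_F$, $\son{\cdot}=\|\cdot\|_*$. The two-norm statement should in fact come out slightly more directly, since there the cross term $(\bC\bC^{\dagger}-\bC_k\bC_k^{\dagger})(\bA-\bC)$ and the subspace-splitting are both governed by a single operator norm and one can avoid the Cauchy--Schwartz step entirely.
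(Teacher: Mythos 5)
Your overall strategy---replacing $\bA\bA^T$ by $\bC\bC^T$ underneath a projector and then invoking Mirsky's theorem~(\ref{e_M}) to compare tail singular values, with Cauchy--Schwarz supplying the rank-weighted alternative in the minimum---uses the same ingredients as the paper. But the specific decomposition you build around them has two genuine problems.

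First, in Step~2 the matrix $\bA-\bC$ does not exist: $\bA$ has $n$ columns and $\bC$ has $c$ columns, and the whole point of the theorem is that these may differ. The only well-defined difference is $\bA\bA^T-\bC\bC^T$, so the passage from $\bA$ to $\bC$ must happen at the level of Gram matrices, e.g.
$\bQ\bA\bA^T\bQ=\bQ\bC\bC^T\bQ+\bQ\left(\bA\bA^T-\bC\bC^T\right)\bQ$ with $\bQ\equiv\bC\bC^{\dagger}-\bC_k\bC_k^{\dagger}$, rather than through $\bA-\bC$ as written. (Your Step~1 additive split is itself salvageable for even Schatten $p$-norms, but only via the column Gram matrix: with $\bM_1=(\bI-\bC\bC^{\dagger})\bA$ and $\bM_2=\bQ\bA$ one has $\bM_1^T\bM_2=\bzero$, hence $\spn{\bM_1+\bM_2}^2=\spnt{\bM_1^T\bM_1+\bM_2^T\bM_2}\leq\spn{\bM_1}^2+\spn{\bM_2}^2$; the row cross terms $\bM_1\bM_2^T$ do not vanish.)

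Second, and more seriously, the split $(\bI-\bC_k\bC_k^{\dagger})=(\bI-\bC\bC^{\dagger})+\bQ$ makes the error term $\spnt{\bA\bA^T-\bC\bC^T}$ (or its rank-weighted variant) appear \emph{three} times rather than twice: once inside Theorem~\ref{t_lc} for the piece $(\bI-\bC\bC^{\dagger})\bA$; once when trading $\bQ\bA\bA^T\bQ$ for $\bQ\bC\bC^T\bQ=\bC_{\perp}\bC_{\perp}^T$; and once more in the Mirsky step converting $\spn{\bC_{\perp}}^2$ into $\spn{\bA-\bA_k}^2$ plus an error. You flag the constant as a concern but do not resolve it, and within your decomposition it cannot be pushed down to $2$ for general even $p$: the two separately pinched error blocks satisfy $\spnt{\bD_1}+\spnt{\bD_2}\geq\spnt{\diag\begin{pmatrix}\bD_1&\bD_2\end{pmatrix}}$, which is the wrong direction for recombining them into a single copy except at $p=2$. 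The paper sidesteps this entirely by never splitting off $(\bI-\bC\bC^{\dagger})\bA$: it applies Theorem~\ref{t_perturbmm} once, directly with the rank-$k$ projector $\proj=\bC_k\bC_k^{\dagger}$ and $\wA=\bC$, so that $(\bI-\proj)\wA=\bC_{\perp}$, and then uses Mirsky exactly once; the two resulting error contributions give the stated factor~$2$. To repair your argument, drop Step~1 and Theorem~\ref{t_lc} altogether and perform the single Gram-matrix replacement under $\bI-\bC_k\bC_k^{\dagger}$.
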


\begin{proof}
We first introduce some notation before proving the bounds.

\paragraph{0. Set up}
Partition $\bA=\bA_k+\bA_{\perp}$ and $\bC=\bC_k+\bC_{\perp}$ to distinguish the
respective best rank-$k$ approximations $\bA_k$ and $\bC_k$.
From $\bA_k\bA_{\perp}^T=\bzero$ and $\bC_k\bC_{\perp}^T=\bzero$ follows
\begin{equation}
\bA\bA^T=\bA_k\bA_k^T+\bA_{\perp}\bA_{\perp}^T, \qquad
\bC\bC^T=\bC_k\bC_k^T+\bC_{\perp}\bC_{\perp}^T.
\end{equation}
Since the relevant matrices are symmetric positive semi-definite,
eigenvalues are equal to singular values. The dominant ones are
 $$\sigma_j(\bA_k\bA_k^T)=\sigma_j(\bA\bA^T)=\sigma_j(\bA)^2,\quad
 \sigma_j(\bC_k\bC_k^T)=\sigma_j(\bC\bC^T)=\sigma_j(\bC)^2, \quad 1\leq j \leq k,$$
 and the subdominant ones are, with $j \geq 1$,
 $$\sigma_j(\bA_{\perp}\bA_{\perp}^T)=\sigma_{k+j}(\bA\bA^T) =\sigma_{k+j}(\bA)^2,\quad
 \sigma_j(\bC_{\perp}\bC_{\perp}^T)=\sigma_{k+j}(\bC\bC^T)=\sigma_{k+j}(\bC)^2.$$
To apply Theorem~\ref{t_perturbmm}, set $\wA=\bC$,
$\proj=\bC_k\bC_k^{\dagger}$, $\rank(\proj_k)=k$, so that
$$(\bI-\proj)\,\wA=(\bI-\bC_k\bC_k^{\dagger})\,(\bC_k+\bC_{\perp})=\bC_{\perp}.$$
Thus
\begin{equation}\label{e_cperp}
\spn{(\bI-\proj)\,\wA}^2=\spn{\bC_{\perp}}^2.
\end{equation}

\paragraph{Two-norm}
Substituting (\ref{e_cperp}) into the two norm bound in Theorem~\ref{t_perturbmm} gives
\begin{equation}\label{e_two}
\|(\bI-\bC_k\bC_k^{\dagger})\,\bA\|_{2}^2 \leq  \|\bC_{\perp}\|_2^2 + \|\bA\bA^T -\bC\bC^T\|_{2}.
\end{equation}
The above and Weyl's theorem imply
\begin{eqnarray*}
\|\bC_{\perp}\|_2^2&=&\|\bC_{\perp}\bC_{\perp}^T\|_2=\lambda_{k+1}(\bC\bC^T)\\
&\leq& \left|\lambda_{k+1}(\bC\bC^T)-\lambda_{k+1}(\bA\bA^T)\right|+\lambda_{1}(\bA_{\perp}\bA_{\perp}^T)\\
&\leq& \|\bA\bA^T-\bC\bC^T\|_2 +\|\bA_{\perp}\|_2^2.
\end{eqnarray*}
Substituting this into (\ref{e_two}) gives
$$\|(\bI-\bC_k\bC_k^{\dagger})\,\bA\|_{2}^2  \leq \|\bA-\bA_k\|_2^2 +  2\, \|\bA\bA^T -\bC\bC^T\|_{2}.$$

\paragraph{Schatten $p$-norm $(p$ even$)$}
Substituting (\ref{e_cperp}) into the Schatten-$p$ norm bound in Theorem~\ref{t_perturbmm} gives
\begin{eqnarray}\label{e_p}
\spn{(\bI-\bC_k\bC_k^{\dagger})\,\bA}^2 &\leq &\spn{\bC_{\perp}}^2+ \\
&&\min\big\{ \spnt{\bA\bA^T-\bC\bC^T}, \, \sqrt[p]{m-c}\,\spn{\bA\bA^T-\bC\bC^T}\big\}.\nonumber
\end{eqnarray}
From (\ref{e_24}) follows $\spn{\bC_{\perp}}^2=\spnt{\bC_{\perp}\bC_{\perp}^T}$.
For a column vector $\bx$, let
$$\|\bx\|_p=\sqrt[p]{\sum_j{|x_j|^{1/p}}}$$
be the ordinary vector $p$-norm, and put the singular values of $\bC_{\perp}\bC_{\perp}^T$
into the vector
$$\bc_{\perp}\equiv\begin{pmatrix} \sigma_{1}(\bC_{\perp}\bC_{\perp}^T) & \cdots &
\sigma_{m-k}(\bC_{\perp}\bC_{\perp}^T)\end{pmatrix}^T.$$
Move from matrix norm to vector norm,
$$\spnt{\bC_{\perp}\bC_{\perp}^T}^{p/2}=\sum_{j=1}^{m-k}{\sigma_j(\bC_{\perp}\bC_{\perp}^T)^{p/2}}
=\sum_{j=1}^{m-k}{c_j^{p/2}}=\spnt{\bc_{\perp}}^{p/2}.$$
Put the singular values of $\bA_{\perp}\bA_{\perp}^T$ into the vector
$$\ba_{\perp}\equiv\begin{pmatrix} \sigma_{1}(\bA_{\perp}\bA_{\perp}^T) & \cdots &
\sigma_{m-k}(\bA_{\perp}\bA_{\perp}^T)\end{pmatrix}^T,$$
and apply the triangle inequality in the vector norm
$$\spnt{\bC_{\perp}\bC_{\perp}^T}=\spnt{\bc_{\perp}}
\leq \spnt{\bc_{\perp}-\ba_{\perp}}+\spnt{\ba_{\perp}}.$$
Substituting the following expression
$$\spnt{\ba_{\perp}}^{p/2}=\sum_{j=1}^{m-k}{\sigma_j(\bA_{\perp}\bA_{\perp}^T)^{p/2}}
=\sum_{j=1}^{m-k}{\sigma_j(\bA_{\perp})^p}=\spn{\bA_{\perp}}^p$$
into the previous bound and applying (\ref{e_24}) again gives
\begin{equation}\label{e_sp}
\spn{\bC_{\perp}}^2=\spnt{\bC_{\perp}\bC_{\perp}^T}\leq \spnt{\bc_{\perp}-\ba_{\perp}}+\spn{\bA_{\perp}}^2.
\end{equation}
\begin{compactenum}
\item First term in the minimum in (\ref{e_p}): Apply Mirsky's Theorem (\ref{e_M}) to the first summand in (\ref{e_sp})
\begin{eqnarray*}
\spnt{\bc_{\perp}-\ba_{\perp}}^{p/2}&=&
\sum_{j=1}^{m-k}{\left|\sigma_{k+j}(\bC\bC^T) -\sigma_{k+j}(\bA\bA^T)\right|^{p/2}}\\
&\leq&\sum_{j=1}^{m}{\left|\sigma_j(\bC\bC^T) -\sigma_j(\bA\bA^T)\right|^{p/2}}
\leq \spnt{\bC\bC^T-\bA\bA^T}^{p/2}.
\end{eqnarray*}
Thus,
$$\spnt{\bc_{\perp}-\ba_{\perp}}\leq \spnt{\bC\bC^T-\bA\bA^T}.$$
Substitute this into (\ref{e_sp}), so that
\begin{equation*}
\spn{\bC_{\perp}}^2\leq \spn{\bA_{\perp}}^2 + \spnt{\bC\bC^T-\bA\bA^T},
\end{equation*}
and the result in turn into (\ref{e_p}) to obtain the first term in the minimum,
\begin{eqnarray*}
\spn{(\bI-\bC_k\bC_k^{\dagger})\,\bA}^2 \leq \spn{\bA_{\perp}}^2 + 2\, \spnt{\bC\bC^T-\bA\bA^T}.
\end{eqnarray*}
\item Second term in the minimum in (\ref{e_p}): Consider the first summand in (\ref{e_sp}), but apply
the Cauchy Schwartz inequality before Mirsky's Theorem~(\ref{e_M}),
\begin{eqnarray*}
\spnt{\bc_{\perp}-\ba_{\perp}}^{p/2}&=&
\sum_{j=1}^{m-k}{\left|\sigma_{k+j}(\bC\bC^T) -\sigma_{k+j}(\bA\bA^T)\right|^{p/2}}\\
&\leq& \sqrt{m-k}\>\sqrt{\sum_{j=1}^{m-k}{\left|\sigma_{k+j}(\bC\bC^T) -\sigma_{k+j}(\bA\bA^T)\right|^{p}}}\\
&\leq& \sqrt{m-k}\>\sqrt{\sum_{j=1}^{m}{\left|\sigma_{k+j}(\bC\bC^T) -\sigma_{k+j}(\bA\bA^T)\right|^{p}}}\\
&\leq &\sqrt{m-k}\>\spn{\bC\bC^T-\bA\bA^T}^{p/2}.
\end{eqnarray*}
Thus,
$$\spnt{\bc_{\perp}-\ba_{\perp}}\leq \sqrt[p]{m-k}\>\spn{\bC\bC^T-\bA\bA^T}.$$
Substitute this into (\ref{e_sp}), so that
\begin{equation*}
\spn{\bC_{\perp}}^2\leq \spn{\bA_{\perp}}^2 + \sqrt{m-k}\,\spn{\bC\bC^T-\bA\bA^T},
\end{equation*}
and the result in turn into (\ref{e_p}) to obtain the second term in the minimum,
\begin{eqnarray*}
\spn{(\bI-\bC_k\bC_k^{\dagger})\,\bA}^2 \leq \spn{\bA_{\perp}}^2 + 2\, \sqrt[p]{m-k}\>\spn{\bC\bC^T-\bA\bA^T}.
\end{eqnarray*}
\end{compactenum}
\paragraph{3. Frobenius norm}
This is the special case $p=2$ with $\stn{\bA}=\|\bA\|_F$ and $\son{\bA}=\|\bA\|_*$.
\end{proof}

\section{Approximation errors and angles between subspaces}\label{s_bounds}
We consider approximations where the rank of the orthogonal projector is at least as large as 
the dimension of the 
dominant subspace, and relate the low-rank approximation error to the 
subspace angle between projector and target space.  
After reviewing assumptions and notation (Section~\ref{s_ass}), we bound the low-rank approximation error
in terms of the subspace angle from below (Section~\ref{s_lower}) and from above (Section~\ref{s_upper}).

\subsection{Assumptions}\label{s_ass}
Given $\bA \in \rmn$ with  a gap after the $k$th singular value,
\begin{equation*}
\|\bA\|_2=\sigma_1(\bA)\geq \cdots\geq \sigma_k(\bA)> 
\sigma_{k+1}(\bA)\geq \cdots \geq \sigma_r(\bA)\geq 0,\qquad
r\equiv\min\{m,n\}.
\end{equation*}
Partition the full SVD $\bA=\bU\bSigma\bV^T$ in Section~\ref{s_sv} to distinguish between dominant
and subdominant parts,
$$\bU=\begin{pmatrix}\bU_k & \bU_{\perp}\end{pmatrix}, \qquad 
\bV=\begin{pmatrix} \bV_k & \bV_{\perp}\end{pmatrix}, \qquad
\bSigma=\diag\begin{pmatrix}\bSigma_k & \bSigma_{\perp}\end{pmatrix},$$
where the dominant parts are 
$$\bSigma_k\equiv\diag\begin{pmatrix}\sigma_1(\bA) & \cdots & \sigma_k(\bA)\end{pmatrix} \in\real^{k\times k},
\qquad \bU_k\in\real^{m\times k}, \qquad \bV_k\in\real^{n\times k},$$
and the subdominant ones 
\begin{equation*}
%\bSigma_{\perp}\equiv\diag\begin{pmatrix}\sigma_{k+1} & \cdots & &\sigma_r\end{pmatrix} 
\bSigma_{\perp}\in\real^{(m-k)\times (n-k)}, 
\qquad \bU_{\perp}\in\real^{m\times (m-k)}, \qquad \bV_{\perp}\in\real^{n\times (n-k)}.
\end{equation*}
Thus $\bA$ is a "direct sum" 
$$\bA=\bA_k+\bA_{\perp}\qquad \text{where} \qquad
\bA_k\equiv \bU_k\bSigma_k\bV_k^T, \quad \bA_{\perp}\equiv \bU_{\perp}\bSigma_{\perp}\bV_{\perp}$$
and
\begin{equation}\label{e_orth}
\bA_{\perp}\bA_k^{\dagger} =\bzero=\bA_{\perp}\bA_k^T.
\end{equation}
The goal is to approximate the $k$-dimensional dominant left singular vector space,
\begin{equation}\label{e_proj}
\proju \equiv \bU_k\bU_k^T=\bA_k\bA_k^{\dagger}.
\end{equation}
To this end, let $\proj\in\rmm$ be an orthogonal projector as in (\ref{e_dproj}),
whose rank is at least as large as the dimension of the targeted subspace,
$$\rank(\proj)\geq \rank(\proju).$$
\subsection{Subspace angle as a lower bound for the approximation error}\label{s_lower}
We bound the low-rank approximation error from below by the subspace angle 
and the $k$th singular value of $\bA$, in the two-norm and the Frobenius norm.

\begin{theorem}\label{t_lau}
With the assumptions in Section~\ref{s_ass},
\begin{equation*}
\|(\bI-\proj)\bA\|_{2,F} \geq \sigma_k(\bA)\,\|\sin{\bTheta}(\proj,\proju)\|_{2,F}. 
\end{equation*}
\end{theorem}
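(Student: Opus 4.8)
The plan is to strip off the subdominant part $\bA_{\perp}$ of $\bA$, which should only decrease the projected norm, and then to convert the resulting statement about $(\bI-\proj)\bA_k$ into one about $(\bI-\proj)\proju$ via unitary invariance and the invertibility of $\bSigma_k$. First I would rewrite the angle: since $\rank(\proj)\ge\rank(\proju)=k$, Lemma~\ref{l_projbasis}---part~1 if the ranks coincide, part~2 if $\rank(\proj)>k$---applied with the orthonormal basis $\bU_k$ of $\range(\proju)$ and an orthonormal basis of $\range(\proj)$ gives $\spn{\sin{\bTheta}(\proj,\proju)}=\spn{(\bI-\proj)\,\proju}$ (using that principal angles are symmetric in the two subspaces). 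So it suffices to prove $\spn{(\bI-\proj)\bA}\ge\sigma_k(\bA)\,\spn{(\bI-\proj)\,\proju}$ in the two-norm and the Frobenius norm.

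Next I would discard $\bA_{\perp}$. From $\bA_k\bA_{\perp}^T=\bzero$, a consequence of~(\ref{e_orth}), we get $\bA\bA^T=\bA_k\bA_k^T+\bA_{\perp}\bA_{\perp}^T$, and since $(\bI-\proj)^2=(\bI-\proj)^T=\bI-\proj$,
\[
(\bI-\proj)\bA\bA^T(\bI-\proj)=(\bI-\proj)\bA_k\bA_k^T(\bI-\proj)+(\bI-\proj)\bA_{\perp}\bA_{\perp}^T(\bI-\proj),
\]
where both summands on the right are positive semidefinite. The eigenvalues of $(\bI-\proj)\bA\bA^T(\bI-\proj)$ and of $(\bI-\proj)\bA_k\bA_k^T(\bI-\proj)$ are the squared singular values of $(\bI-\proj)\bA$ and of $(\bI-\proj)\bA_k$, so Weyl's monotonicity theorem gives $\sigma_j((\bI-\proj)\bA)\ge\sigma_j((\bI-\proj)\bA_k)$ for every $j$, and therefore $\spn{(\bI-\proj)\bA}\ge\spn{(\bI-\proj)\bA_k}$ in every Schatten $p$-norm (for the Frobenius norm one may instead invoke Pythagoras directly, since $(\bI-\proj)\bA_k$ and $(\bI-\proj)\bA_{\perp}$ are orthogonal in the Frobenius inner product because $\bA_{\perp}\bA_k^T=\bzero$).

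Finally I would reduce $(\bI-\proj)\bA_k$ to $(\bI-\proj)\proju$. Since $\bA_k=\bU_k\bSigma_k\bV_k^T$ with $\bU_k,\bV_k$ having orthonormal columns, unitary invariance gives $\spn{(\bI-\proj)\bA_k}=\spn{(\bI-\proj)\bU_k\bSigma_k}$ and likewise $\spn{(\bI-\proj)\bU_k}=\spn{(\bI-\proj)\bU_k\bU_k^T}=\spn{(\bI-\proj)\,\proju}$. Because $\bSigma_k$ is invertible with $\|\bSigma_k^{-1}\|_2=1/\sigma_k(\bA)$, strong submultiplicativity applied to $(\bI-\proj)\bU_k=\big((\bI-\proj)\bU_k\bSigma_k\big)\bSigma_k^{-1}$ yields
\[
\spn{(\bI-\proj)\,\proju}=\spn{(\bI-\proj)\bU_k}\ \le\ \frac{1}{\sigma_k(\bA)}\,\spn{(\bI-\proj)\bU_k\bSigma_k}=\frac{\spn{(\bI-\proj)\bA_k}}{\sigma_k(\bA)},
\]
and chaining this with the previous two steps proves the theorem. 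The one step requiring a genuine idea is the second: recognising that the subdominant block $\bA_{\perp}$ can only enlarge $\spn{(\bI-\proj)\bA}$, which is precisely where the orthogonality relation~(\ref{e_orth}) inherent in the SVD splitting is used; steps one and three are routine bookkeeping with Lemma~\ref{l_projbasis}, unitary invariance and strong submultiplicativity.
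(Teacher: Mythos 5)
Your proof is correct, and its skeleton---reduce the sine of the angle to $\spn{(\bI-\proj)\,\proju}$ via Lemma~\ref{l_projbasis}, then pick up the factor $1/\sigma_k(\bA)$ by submultiplicativity---is the same as the paper's. The difference is in how the subdominant part is discarded. The paper does it with a single algebraic identity: since $\bA_{\perp}\bA_k^{\dagger}=\bzero$ by (\ref{e_orth}), one has $(\bI-\proj)\,\proju=(\bI-\proj)\,\bA_k\bA_k^{\dagger}=(\bI-\proj)\,\bA\,\bA_k^{\dagger}$, and then strong submultiplicativity with $\|\bA_k^{\dagger}\|_2=1/\sigma_k(\bA)$ finishes the argument in one chain of (in)equalities. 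You instead interpolate through $\spn{(\bI-\proj)\bA_k}$, which forces you to prove the extra monotonicity statement $\spn{(\bI-\proj)\bA}\geq\spn{(\bI-\proj)\bA_k}$ via the positive semidefinite splitting of $(\bI-\proj)\bA\bA^T(\bI-\proj)$ and Weyl's monotonicity theorem. That step is valid (the Frobenius-norm Pythagoras shortcut you mention also works), and it yields a reusable fact that holds in every Schatten $p$-norm; but it is machinery the paper's one-line insertion of $\bA\bA_k^{\dagger}=\bA_k\bA_k^{\dagger}$ avoids entirely. Your final step is essentially the paper's closing inequality written out with $\bSigma_k^{-1}$ in place of $\bA_k^{\dagger}$.
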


\begin{proof}
From Lemma~\ref{l_projbasis}, (\ref{e_proj}), and (\ref{e_orth})  follows
\begin{eqnarray*}
\|\sin{\bTheta}(\proj,\proju)\|_{2,F}&=&\|(\bI-\proj)\,\proju\|_{2,F}=\|(\bI-\proj)\,\bA_k\bA_k^\dagger\|_{2,F}\\
&=& \|(\bI-\proj)\,(\bA_k+\bA_{\perp})\,\bA_k^\dagger\|_{2,F}= \|(\bI-\proj)\,\bA\bA_k^\dagger\|_{2,F}\\
& \leq &\|\bA_k^{\dagger}\|_2\, \|(\bI-\proj)\,\bA\|_{2,F}=\|(\bI-\proj)\bA\|_{2,F}/\sigma_k(\bA).
\end{eqnarray*}
\end{proof}
\subsection{Subspace angle as upper bound for the approximation error}\label{s_upper}
We present upper bounds for the low-rank approximation error in terms of the subspace
angle, the two norm (Theorem~\ref{t_lal1}) and Frobenius norm (Theorem~\ref{t_lal2}).

The bounds are guided by the following observation.
In the ideal case, where $\proj$ completely captures the target space, we have
$\range(\proj)=\range(\proju)=\range(\bA_k)$,
and 
\begin{equation*}
\|\sin{\bTheta}(\proj,\proju)\|_{2,F}=0, \qquad
\|(\bI-\proj)\bA\|_{2,F}=\|\bA_{\perp}\|_{2,F}=\|\bSigma_{\perp}\|_{2,F},
\end{equation*}
thus suggesting an additive error in the general, non-ideal case. 

\begin{theorem}[Two-norm]\label{t_lal1}
With the assumptions in Section~\ref{s_ass}, 
\begin{eqnarray*}
\|(\bI-\proj)\bA\|_2\leq \|\bA\|_2\,\|\sin{\bTheta(\proj,\proju)}\|_2
+\|\bA-\bA_k\|_2\,\|\cos{\bTheta(\bI-\proj,\bI-\proju)}\|_2.
\end{eqnarray*}
If also $k<\rank(\proj)+k<m$, then
\begin{eqnarray*}
\|(\bI-\proj)\bA\|_2\leq \|\bA\|_2\,\|\sin{\bTheta(\proj,\proju)}\|_2 +\|\bA-\bA_k\|_2.
\end{eqnarray*}
\end{theorem}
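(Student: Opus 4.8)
The plan is to split $(\bI-\proj)\bA$ along the direct sum $\bA = \bA_k + \bA_\perp$ and bound the two pieces separately, matching the ideal-case heuristic stated just before the theorem. For the first piece I would write $(\bI-\proj)\bA_k = (\bI-\proj)\proju\bA_k$ since $\proju = \bA_k\bA_k^\dagger$ is the projector onto $\range(\bA_k)$, hence $\proju\bA_k = \bA_k$. Then strong submultiplicativity gives $\|(\bI-\proj)\bA_k\|_2 = \|(\bI-\proj)\proju\bA_k\|_2 \le \|(\bI-\proj)\proju\|_2\,\|\bA_k\|_2 = \|\sin\bTheta(\proj,\proju)\|_2\,\|\bA\|_2$, using Lemma~\ref{l_projbasis} for the identification of $\|(\bI-\proj)\proju\|_2$ with $\|\sin\bTheta(\proj,\proju)\|_2$ and $\|\bA_k\|_2 = \sigma_1(\bA) = \|\bA\|_2$. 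That produces the first term of the claimed bound.

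For the second piece, $(\bI-\proj)\bA_\perp$, I would factor $\bA_\perp = (\bI-\proju)\bA_\perp$, which holds because $\range(\bA_\perp) \perp \range(\bA_k) = \range(\proju)$, so $(\bI-\proj)\bA_\perp = (\bI-\proj)(\bI-\proju)\bA_\perp$. Now the crucial observation is that $(\bI-\proj)(\bI-\proju)$ is a product of the two complementary projectors, and its two-norm is exactly $\|\cos\bTheta(\bI-\proj,\bI-\proju)\|_2$ by the definition of principal angles applied to the subspaces $\range(\bI-\proj)$ and $\range(\bI-\proju)$ (this is $\|\bZ^T\hZ\|_2$ for orthonormal bases $\bZ,\hZ$ of those two ranges). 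Strong submultiplicativity then gives $\|(\bI-\proj)\bA_\perp\|_2 \le \|(\bI-\proj)(\bI-\proju)\|_2\,\|\bA_\perp\|_2 = \|\cos\bTheta(\bI-\proj,\bI-\proju)\|_2\,\|\bA-\bA_k\|_2$, using $\bA_\perp = \bA - \bA_k$. Adding the two bounds via the triangle inequality $\|(\bI-\proj)\bA\|_2 \le \|(\bI-\proj)\bA_k\|_2 + \|(\bI-\proj)\bA_\perp\|_2$ yields the first displayed inequality.

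For the second, sharper inequality, under the extra hypothesis $k < \rank(\proj)+k < m$ I would simply bound $\|\cos\bTheta(\bI-\proj,\bI-\proju)\|_2 \le 1$, which is immediate since cosines of angles never exceed $1$ (equivalently, the product of two orthogonal projectors has norm at most $1$). The dimension condition is what guarantees that the relevant cosine matrix is genuinely defined and that this crude bound is the right thing rather than something degenerate: with $\rank(\bI-\proj) = m - \rank(\proj)$ and $\rank(\bI-\proju) = m-k$, the condition $\rank(\proj)+k < m$ ensures $\rank(\bI-\proj) + \rank(\bI-\proju) > m$, so the two complementary subspaces must intersect and no spurious sines appear; but for the stated bound all we need is the trivial $\le 1$ estimate.

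The main obstacle I anticipate is pinning down the identification $\|(\bI-\proj)(\bI-\proju)\|_2 = \|\cos\bTheta(\bI-\proj,\bI-\proju)\|_2$ with the correct indexing conventions — i.e., making sure the $\cos\bTheta$ matrix in Lemma~\ref{l_projbasis}'s framework is formed from the \emph{smaller-dimensional} of the two complementary ranges, and that the norm picks out the largest cosine (smallest angle). Everything else — the two factorizations using idempotency and orthogonality of the SVD blocks, and the two applications of strong submultiplicativity — is routine. I would also double-check that $\|\bA_k\|_2 = \|\bA\|_2$ even when $\rank(\bA) = k$ or when there are repeated singular values; this is harmless since $\sigma_1(\bA_k) = \sigma_1(\bA)$ regardless.
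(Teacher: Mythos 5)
Your proposal is correct and follows essentially the same route as the paper: split $\bA=\bA_k+\bA_{\perp}$, bound $\|(\bI-\proj)\bA_k\|_2$ via $\|(\bI-\proj)\proju\|_2=\|\sin{\bTheta(\proj,\proju)}\|_2$, and bound $\|(\bI-\proj)\bA_{\perp}\|_2$ via $\|(\bI-\proj)(\bI-\proju)\|_2=\|\cos{\bTheta(\bI-\proj,\bI-\proju)}\|_2$. The only (harmless) deviation is in the special case $k<\rank(\proj)+k<m$, where you invoke the trivial estimate $\|\cos{\bTheta(\bI-\proj,\bI-\proju)}\|_2\leq 1$ while the paper uses the CS decomposition (Corollary~\ref{c_csnequal}) to show this quantity actually equals $1$ there; both suffice for the stated upper bound.
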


\begin{proof}
From $\bA=\bA_k+\bA_{\perp}$ and the triangle inequality follows
\begin{eqnarray*}
\|(\bI-\proj)\bA\|_2&\leq&\|(\bI-\proj)\bA_k\|_2+ \|(\bI-\proj)\bA_{\perp}\|_2.
\end{eqnarray*}

\paragraph{First summand}
Since $\rank(\proj)\geq \rank(\proju)$, Lemma~\ref{l_projbasis} implies
\begin{eqnarray*}
\|(\bI-\proj)\bA_k\|_2&\leq& \|(\bI-\proj)\,\bU_k\|_2\|\bSigma_k\|_2=\|\bA\|_2\,\|(\bI-\proj)\proju\|_2\\
&=&\|\bA\|_2\, \|\sin{\bTheta}(\proj,\proju)\|_2
\end{eqnarray*}
Substitute this into the previous bound to obtain
\begin{eqnarray}\label{e_5}
\|(\bI-\proj)\bA\|_2&\leq&\|\bA\|_2\,\|\sin{\bTheta}(\proj,\proju)\|_2+ \|(\bI-\proj)\bA_{\perp}\|_2.
\end{eqnarray}

\paragraph{Second summand} 
Submultiplicativity implies
\begin{eqnarray*}
\|(\bI-\proj)\bA_{\perp}\|_2\leq
\|(\bI-\proj)\,\bU_{\perp}\|_2\,\|\bSigma_{\perp}\|_2=
\|\bA-\bA_k\|_2\>\|(\bI-\proj)\,\bU_{\perp}\|_2.
\end{eqnarray*}
Regarding the last factor, the full SVD of $\bA$ in Section~\ref{s_ass} implies 
$$\range(\bU_{\perp})=\range(\bU_{\perp}\bU_{\perp}^T)=\range(\bU_k\bU_k^T)^{\perp}=\range(\proju)^{\perp}
=\range(\bI-\proju)$$
so that 
\begin{equation*}
\|(\bI-\proj)\,\bU_{\perp}\|_2=\|(\bI-\proj)\,(\bI-\proju)\|_2=\|\cos{\bTheta(\bI-\proj,\bI-\proju)}\|_2.
\end{equation*}
Thus, 
$$\|(\bI-\proj)\bA_{\perp}\|_2\leq\|\bA-\bA_k\|_2\,\|\cos{\bTheta(\bI-\proj,\bI-\proju)}\|_2.$$
Substitute this into (\ref{e_5}) to obtain the first bound.

\paragraph{Special case $\rank(\proj)+k<m$}
From Corollary~\ref{c_csnequal} follows with $\ell\equiv \rank(\proj)$
\begin{eqnarray*}
\|\cos{\bTheta}(\bI-\proj,\bI-\proju)\|_2=
\Big\|\begin{pmatrix}\bI_{m-(k+\ell)} & \\ & \cos{\bTheta}(\proj,\proju)\end{pmatrix}\Big\|_2=1.
\end{eqnarray*}
\end{proof}

\begin{theorem}[Frobenius norm]\label{t_lal2}
With the assumptions in Section~\ref{s_ass} 
\begin{eqnarray*}
\|(\bI-\proj)\bA\|_2\leq \|\bA\|_2\,\|\sin{\bTheta(\proj,\proju)}\|_F+
\min\left\{\|\bA-\bA_k\|_2 \, \|\bGamma\|_F,\>\|\bA-\bA_k\|_F\,\|\bGamma\|_2\right\},
\end{eqnarray*}
where $\bGamma\equiv \cos{\bTheta(\bI-\proj,\bI-\proju)}$.

If also $k<\rank(\proj)+k<m$, then 
\begin{eqnarray*}
\|(\bI-\proj)\bA\|_F&\leq& \|\bA\|_2\,\|\sin{\bTheta(\proj,\proju)}\|_F +\|\bA-\bA_k\|_F.
\end{eqnarray*}
\end{theorem}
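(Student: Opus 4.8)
The plan is to follow the template of the two-norm proof (Theorem~\ref{t_lal1}), carrying out the same decomposition but tracking, at each application of strong submultiplicativity, which factor is kept in the Frobenius norm and which is measured in the two-norm; the minimum on the right-hand side arises precisely from the two admissible bookkeeping choices for the subdominant term.

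First I would split $\bA=\bA_k+\bA_{\perp}$ and apply the triangle inequality, $\|(\bI-\proj)\bA\|_F\le\|(\bI-\proj)\bA_k\|_F+\|(\bI-\proj)\bA_{\perp}\|_F$. For the dominant term, write $\bA_k=\bU_k\bSigma_k\bV_k^T$ and apply strong submultiplicativity with a \emph{trivial} left factor, so that $(\bI-\proj)\bU_k$ stays in the Frobenius norm while $\bSigma_k\bV_k^T$ contributes only its two-norm $\|\bSigma_k\|_2=\|\bA\|_2$; this gives $\|(\bI-\proj)\bA_k\|_F\le\|\bA\|_2\,\|(\bI-\proj)\bU_k\|_F$. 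Then, exactly as in the two-norm proof, $\|(\bI-\proj)\bU_k\|_F=\|(\bI-\proj)\proju\|_F=\|\sin{\bTheta(\proj,\proju)}\|_F$: the first equality because $\bU_k$ has orthonormal columns and $\proju=\bU_k\bU_k^T$, the second from Lemma~\ref{l_projbasis} applied to $\proj$ (rank $\ge k$) and $\proju$ (rank $k$).

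For the subdominant term, write $\bA_{\perp}=\bU_{\perp}\bSigma_{\perp}\bV_{\perp}^T$ and note that $\bU_{\perp}$ has orthonormal columns with $\bU_{\perp}\bU_{\perp}^T=\bI-\proju$, so that $\|(\bI-\proj)\bU_{\perp}\|_F=\|(\bI-\proj)(\bI-\proju)\|_F=\|\bGamma\|_F$ and likewise with the two-norm in place of the Frobenius norm (both identities are the ``product of two orthogonal projectors is a cosine matrix'' computation already used for Theorem~\ref{t_lal1}). Strong submultiplicativity applied to $(\bI-\proj)\bU_{\perp}\bSigma_{\perp}\bV_{\perp}^T$ then yields two bounds: keeping $(\bI-\proj)\bU_{\perp}$ in the Frobenius norm gives $\|\bA-\bA_k\|_2\,\|\bGamma\|_F$ (since $\|\bSigma_{\perp}\|_2=\|\bA-\bA_k\|_2$), while keeping $\bSigma_{\perp}$ in the Frobenius norm gives $\|\bA-\bA_k\|_F\,\|\bGamma\|_2$ (since $\|\bSigma_{\perp}\|_F=\|\bA-\bA_k\|_F$). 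Taking the smaller of the two and adding the dominant term proves the first displayed inequality.

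Finally, for the special case $k<\rank(\proj)+k<m$, I would invoke the same CS-decomposition fact from Appendix~\ref{s_app} used in Theorem~\ref{t_lal1} (Corollary~\ref{c_csnequal}): with $\ell\equiv\rank(\proj)$ one gets $\cos{\bTheta(\bI-\proj,\bI-\proju)}=\diag(\bI_{m-(k+\ell)},\,\cos{\bTheta(\proj,\proju)})$, hence $\|\bGamma\|_2=1$ and the second entry of the minimum collapses to $\|\bA-\bA_k\|_F$. The computations are routine; the only points requiring care are the identification $\|(\bI-\proj)(\bI-\proju)\|_F=\|\bGamma\|_F$ and the block form of $\bGamma$ in the special case, and I expect that block-structure step — which rests on the appendix rather than the main text — to be the only mild obstacle.
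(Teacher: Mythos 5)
Your proposal is correct and follows essentially the same route as the paper's own proof: split $\bA=\bA_k+\bA_{\perp}$, bound the dominant term by $\|\bA\|_2\,\|\sin{\bTheta(\proj,\proju)}\|_F$ via Lemma~\ref{l_projbasis}, obtain the minimum from the two choices of which factor of $(\bI-\proj)\bU_{\perp}\bSigma_{\perp}$ receives the two-norm, and invoke Corollary~\ref{c_csnequal} for the special case. You also implicitly repair the typo in the theorem's first display (the left-hand side should carry the Frobenius norm, as your proof assumes).
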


\begin{proof}
With strong submultiplicativity, the analogue of (\ref{e_5}) is
\begin{eqnarray}\label{e_5b}
\|(\bI-\proj)\bA\|_F&\leq&\|\bA\|_2\,\|\sin{\bTheta}(\proj,\proju)\|_F+ \|(\bI-\proj)\bA_{\perp}\|_F.
\end{eqnarray}
There are  two options to bound
$\|(\bI-\proj)\bA_{\perp}\|_F=\|(\bI-\proj)\,\bU_{\perp}\bSigma_{\perp}\|_F$, depending on which factor gets
the two norm.  Either
\begin{eqnarray*}
\|(\bI-\proj)\,\bU_{\perp}\bSigma_{\perp}\|_F\leq \|(\bI-\proj)\,\bU_{\perp}\|_F\,\|\bSigma_{\perp}\|_2
=\|\bA-\bA_k\|_2\,\|(\bI-\proj)\,\bU_{\perp}\|_F
\end{eqnarray*}
or
\begin{eqnarray*}
\|(\bI-\proj)\,\bU_{\perp}\bSigma_{\perp}\|_F&\leq &\|(\bI-\proj)\,\bU_{\perp}\|_2\,\|\bSigma_{\perp}\|_F
=\|\bA-\bA_k\|_F\,\|(\bI-\proj)\,\bU_{\perp}\|_2.
\end{eqnarray*}
As in the proof of Theorem~\ref{t_lal1} one shows
\begin{equation*}
\|(\bI-\proj)\,\bU_{\perp}\|_{2,F}=\|\cos{\bTheta(\bI-\proj,\bI-\proju)}\|_{2,F},
\end{equation*}
as well as the expression for the special case $\rank(\proj)+k<m$.
\end{proof}

\appendix
\section{CS Decompositions}\label{s_app}
We review expressions for the CS decompositions from
\cite[Theorem 8.1]{PaigeWei94} and \cite[Section 2]{ZKny13}.

We consider a subspace $\range(\bZ)$ of dimension $k$, and a subspace of $\range(\hZ)$ of dimension $\ell\geq k$,
whose dimensions sum up to less than the dimension of the host space.

Let $\begin{pmatrix} \bZ & \bZ_{\perp}\end{pmatrix},
\begin{pmatrix} \hZ & \hZ_{\perp}\end{pmatrix} \in\rmm$  be orthogonal matrices where
$\bZ\in\real^{m\times k}$ and $\hZ\in\real^{m\times \ell}$ with $\ell\geq k$.
Let
$$\begin{pmatrix} \bZ & \bZ_{\perp}\end{pmatrix}^T\,
\begin{pmatrix} \hZ & \hZ_{\perp}\end{pmatrix} =
\begin{pmatrix} \bZ^T\hZ & \bZ^T\hZ_{\perp} \\ \bZ_{\perp}^T\hZ & \bZ_{\perp}^T\hZ_{\perp}\end{pmatrix}=
\begin{pmatrix}\bQ_{11} & \\ &\bQ_{12}\end{pmatrix}\, \bD\,
\begin{pmatrix}\bQ_{21} & \\ & \bQ_{22}\end{pmatrix}$$
be a CS decomposition where $\bQ_{11}\in\real^{k\times k}$, $\bQ_{12}\in\real^{(m-k)\times (m-k)}$,
$\bQ_{21} \in\real^{\ell\times \ell}$ and $\bQ_{22}\in\real^{(m-\ell)\times (m-\ell)}$ are all orthogonal matrices.

\begin{theorem}\label{t_csnequal}
If $k<\ell<m-k$ then
\begin{equation*}
\bD \ = \ \begin{blockarray}{ccccccc}
r&s&\ell-(r+s)&m-(k+\ell)+r&s& k-(r+s)\\
\begin{block}{[ccc|ccc]c}
\bI_r &  & &\bzero& & &r \\
&\bC & & &\bS&&s\\
 & & \bzero & & &\bI_{k-(r+s)} &k-(s+r)\\
\BAhline
\bzero& & &-\bI_{m-(k+\ell)+r}&&&m-(k+\ell)+r\\
  & \bS &  & & -\bC &&s  \\
 && \bI_{\ell-(r+s)} &  & & \bzero& \ell-(r+s)\\
\end{block}
\end{blockarray}.
\end{equation*}
Here $\bC^2+\bS^2=\bI_s$ with
$$\bC=\diag\begin{pmatrix}\cos{\theta_1} & \cdots & \cos{\theta_s} \end{pmatrix}, \quad
\bS=\diag\begin{pmatrix}\sin{\theta_1} & \cdots & \sin{\theta_s}\end{pmatrix},$$
and
\begin{eqnarray*}
&&r= \dim\left(\range(\bZ)\cap\range(\hZ)\right), \quad
m-(k+\ell)+r=\dim\left(\range(\bZ_{\perp})\cap\range(\hZ_{\perp})\right)\\
&&\ell-(r+s) = \dim\left(\range(\bZ_{\perp})\cap\range(\hZ)\right), \
k-(r+s)=\dim\left(\range(\bZ)\cap\range(\hZ_{\perp})\right).
\end{eqnarray*}
\end{theorem}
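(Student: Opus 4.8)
The plan is to deduce the statement from the standard CS decomposition of a $2\times2$-partitioned orthogonal matrix, and then to match the blocks of $\bD$ to the four subspace intersections by a dimension count.

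First I would observe that the product $\begin{pmatrix}\bZ & \bZ_{\perp}\end{pmatrix}^T\begin{pmatrix}\hZ & \hZ_{\perp}\end{pmatrix}$ is a product of two $m\times m$ orthogonal matrices, hence itself orthogonal, and that it is partitioned as a $2\times2$ block matrix whose leading block $\bZ^T\hZ$ is $k\times\ell$ with $k\leq\ell$. Applying the CS decomposition \cite[Theorem 8.1]{PaigeWei94} to it produces orthogonal matrices $\bQ_{11}\in\real^{k\times k}$, $\bQ_{12}\in\real^{(m-k)\times(m-k)}$, $\bQ_{21}\in\real^{\ell\times\ell}$, $\bQ_{22}\in\real^{(m-\ell)\times(m-\ell)}$ for which the product equals $\diag(\bQ_{11},\bQ_{12})\,\bD\,\diag(\bQ_{21},\bQ_{22})$, with $\bD$ of the displayed shape: a leading identity block for the directions common to the two subspaces, a trailing zero block for the mutually orthogonal directions, and an interior $2\times2$ array assembled from the diagonal matrices $\pm\bC$ and $\pm\bS$ that record the genuine angles $\theta_j\in(0,\pi/2)$; the sign pattern, together with $\bC^2+\bS^2=\bI_s$, is forced by orthogonality of $\bD$.

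Next I would carry out the bookkeeping. Let $r$ be the multiplicity of the singular value $1$ in $\bZ^T\hZ$ and $s$ the number of its singular values lying in $(0,1)$. The CS decomposition forces $r+s\leq k\leq\ell$, so the widths $k-(r+s)$ and $\ell-(r+s)$ are nonnegative, while the hypothesis $\ell<m-k$ gives $m-(k+\ell)+r\geq m-(k+\ell)>0$; hence each of the six block widths and heights in the displayed array is nonnegative, and one checks directly that they sum to $m$ along each axis. The step I expect to be the main obstacle is keeping this six-way partition consistent across the four products: I would record exactly which $k\times\ell$, $k\times(m-\ell)$, $(m-k)\times\ell$, and $(m-k)\times(m-\ell)$ sub-block of $\bD$ corresponds to $\bZ^T\hZ$, $\bZ^T\hZ_{\perp}$, $\bZ_{\perp}^T\hZ$, and $\bZ_{\perp}^T\hZ_{\perp}$ respectively, and verify that the placement of the $\bI$, $\bzero$, $\bC$, $\bS$ entries is as drawn.

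Finally I would identify the four integers geometrically. The key point is that the multiplicity of the singular value $1$ in $\bZ^T\hZ$ equals $\dim(\range(\bZ)\cap\range(\hZ))$: indeed $\|\bZ^T\hZ\bx\|^2=(\hZ\bx)^T\bZ\bZ^T(\hZ\bx)\leq\|\hZ\bx\|^2=\|\bx\|^2$ because $\bZ\bZ^T$ is the orthogonal projector onto $\range(\bZ)$, with equality if and only if $\hZ\bx\in\range(\bZ)$; hence the unit right singular vectors of $\bZ^T\hZ$ span $\{\bx:\hZ\bx\in\range(\bZ)\}$, which $\hZ$ maps isomorphically onto $\range(\bZ)\cap\range(\hZ)$. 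The same argument applied to $\bZ^T\hZ_{\perp}$, $\bZ_{\perp}^T\hZ$, and $\bZ_{\perp}^T\hZ_{\perp}$, combined with the number of $\bI$-entries in the corresponding sub-block of $\bD$, yields $r=\dim(\range(\bZ)\cap\range(\hZ))$, $k-(r+s)=\dim(\range(\bZ)\cap\range(\hZ_{\perp}))$, $\ell-(r+s)=\dim(\range(\bZ_{\perp})\cap\range(\hZ))$, and $m-(k+\ell)+r=\dim(\range(\bZ_{\perp})\cap\range(\hZ_{\perp}))$, which are the four claimed identities. Nothing beyond the cited CS decomposition and this unit-singular-value observation is needed; the remaining work is entirely organizational.
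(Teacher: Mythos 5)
Your proposal is correct, but note that the paper itself offers no proof of this theorem: Appendix~\ref{s_app} explicitly presents it as a \emph{review} of the CS decomposition, citing \cite[Theorem 8.1]{PaigeWei94} and \cite[Section 2]{ZKny13}, so there is nothing in the paper to compare against line by line. What you have done is reconstruct the content behind that citation, and the reconstruction is sound: the reduction to the CS decomposition of the orthogonal matrix $\begin{pmatrix}\bZ & \bZ_{\perp}\end{pmatrix}^T\begin{pmatrix}\hZ & \hZ_{\perp}\end{pmatrix}$, the check that the six block widths are nonnegative (using $r+s\leq k\leq\ell$ and $\ell<m-k$) and sum to $m$, and the identification of $r$, $k-(r+s)$, $\ell-(r+s)$, $m-(k+\ell)+r$ with the four intersection dimensions via the multiplicity of the singular value $1$ in the corresponding off-diagonal or diagonal sub-block are all correct. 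Your key lemma -- that $\|\bZ^T\hZ\bx\|=\|\bx\|$ iff $\hZ\bx\in\range(\bZ)$, so the unit-singular-value right singular vectors of $\bZ^T\hZ$ are carried by $\hZ$ isomorphically onto $\range(\bZ)\cap\range(\hZ)$ -- is exactly the standard argument and is stated cleanly. One small imprecision: the \emph{sign pattern} of $\bD$ (the $-\bI$ and $-\bC$ blocks) is not literally forced by orthogonality of $\bD$; orthogonality only forces $\bC^2+\bS^2=\bI_s$ and the relative signs within each $2\times2$ rotation block, while the remaining signs are a normalization that can be absorbed into $\bQ_{12}$ and $\bQ_{22}$. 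Since the theorem asserts the specific Paige--Wedin normal form, you should either cite that form as is (as the paper does) or say explicitly that the signs are a choice of convention rather than a consequence of orthogonality. This does not affect any of the downstream uses in Corollary~\ref{c_csnequal}, which only involve singular values.
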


\begin{corollary}\label{c_csnequal}
From Theorem~\ref{t_csnequal} follows
\begin{eqnarray*}\label{e_csequal1}
\|\sin{\bTheta}(\bZ,\hZ)\|_{2,F}&=&\|\bZ^T\hZ_{\perp}\|_{2,F}
=\Big\|\begin{pmatrix}\bS & \\ & \bI_{k-(r+s)}\end{pmatrix}\Big\|_{2,F}\\
\|\cos{\bTheta}(\bZ,\hZ)\|_{2,F}&=&\|\bZ^T\hZ\|_{2,F}=\Big\|\begin{pmatrix}\bI_r & \\ & \bC\end{pmatrix}\Big\|_{2,F}\\
\|\cos{\bTheta}(\bZ_{\perp},\hZ_{\perp})\|_{2,F}&=&\|\bZ_{\perp}^T\hZ_{\perp}\|_{2,F}=
\Big\|\begin{pmatrix}\bI_{m-(k+\ell)} & \\ & \cos{\bTheta}(\bZ,\hZ)\end{pmatrix}\Big\|_{2,F}.
\end{eqnarray*}
\end{corollary}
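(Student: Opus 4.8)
The plan is to read the three identities directly off the CS decomposition in Theorem~\ref{t_csnequal}, using only the unitary invariance of the two-norm and Frobenius norm together with the facts that deleting zero rows or columns and flipping an overall sign do not change singular values.

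First I would fix the $2\times2$ block partition induced by the CS decomposition. Grouping the first three row blocks of $\bD$ (total size $k$) versus the last three (total size $m-k$), and the first three column blocks (total size $\ell$) versus the last three (total size $m-\ell$), produces $\bD=\begin{pmatrix}\bD_{11}&\bD_{12}\\\bD_{21}&\bD_{22}\end{pmatrix}$ conformal with $\begin{pmatrix}\bZ^T\hZ&\bZ^T\hZ_{\perp}\\\bZ_{\perp}^T\hZ&\bZ_{\perp}^T\hZ_{\perp}\end{pmatrix}=\begin{pmatrix}\bQ_{11}&\\&\bQ_{12}\end{pmatrix}\bD\begin{pmatrix}\bQ_{21}&\\&\bQ_{22}\end{pmatrix}$. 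Reading the displayed $\bD$, one gets $\bD_{11}=\diag(\bI_r,\bC,\bzero)$ of size $k\times\ell$, $\bD_{12}=\diag(\bzero,\bS,\bI_{k-(r+s)})$ of size $k\times(m-\ell)$, and $\bD_{22}=-\diag(\bI_{m-(k+\ell)+r},\bC,\bzero)$ of size $(m-k)\times(m-\ell)$, with zero blocks of the indicated sizes. Hence $\bZ^T\hZ=\bQ_{11}\bD_{11}\bQ_{21}$, $\bZ^T\hZ_{\perp}=\bQ_{11}\bD_{12}\bQ_{22}$, $\bZ_{\perp}^T\hZ_{\perp}=\bQ_{12}\bD_{22}\bQ_{22}$, and since the $\bQ_{ij}$ are orthogonal, unitary invariance gives $\|\bZ^T\hZ\|_{2,F}=\|\bD_{11}\|_{2,F}$, $\|\bZ^T\hZ_{\perp}\|_{2,F}=\|\bD_{12}\|_{2,F}$, $\|\bZ_{\perp}^T\hZ_{\perp}\|_{2,F}=\|\bD_{22}\|_{2,F}$. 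Discarding the zero rows and columns (and the overall minus sign in $\bD_{22}$) then yields $\|\bD_{11}\|_{2,F}=\|\diag(\bI_r,\bC)\|_{2,F}$, $\|\bD_{12}\|_{2,F}=\|\diag(\bS,\bI_{k-(r+s)})\|_{2,F}$, and $\|\bD_{22}\|_{2,F}=\|\diag(\bI_{m-(k+\ell)+r},\bC)\|_{2,F}=\|\diag(\bI_{m-(k+\ell)},\bI_r,\bC)\|_{2,F}$, which, after absorbing $\bI_r$ and a trailing zero block into $\cos{\bTheta}(\bZ,\hZ)$, is exactly the right-most quantity in each of the three asserted equalities.

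It remains to identify the left-most quantities. By the definition of principal angles the singular values of $\bZ^T\hZ$ are $\cos\theta_1,\dots,\cos\theta_k$, so $\|\cos{\bTheta}(\bZ,\hZ)\|_{2,F}=\|\bZ^T\hZ\|_{2,F}$ and likewise $\|\cos{\bTheta}(\bZ_{\perp},\hZ_{\perp})\|_{2,F}=\|\bZ_{\perp}^T\hZ_{\perp}\|_{2,F}$. For the sines, Theorem~\ref{t_csnequal} shows the $k$ principal angles between $\range(\bZ)$ and $\range(\hZ)$ consist of $r$ angles equal to $0$ (from $\range(\bZ)\cap\range(\hZ)$), the $s$ angles in $(0,\tfrac{\pi}{2})$ carried by $\bC,\bS$ with $\bC^2+\bS^2=\bI_s$, and $k-(r+s)$ angles equal to $\tfrac{\pi}{2}$ (from $\range(\bZ)\cap\range(\hZ_{\perp})$); hence, up to a diagonal permutation, $\cos{\bTheta}(\bZ,\hZ)=\diag(\bI_r,\bC,\bzero)$ and $\sin{\bTheta}(\bZ,\hZ)=\diag(\bzero,\bS,\bI_{k-(r+s)})$, so $\|\sin{\bTheta}(\bZ,\hZ)\|_{2,F}=\|\diag(\bS,\bI_{k-(r+s)})\|_{2,F}=\|\bZ^T\hZ_{\perp}\|_{2,F}$, completing all three identities.

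The only delicate point is the dimension bookkeeping: one must verify that $\bD_{11},\bD_{12},\bD_{22}$ have sizes $k\times\ell$, $k\times(m-\ell)$, $(m-k)\times(m-\ell)$, that their min-dimensions (hence the numbers of singular values retained) are $k$, $k$, and $m-\ell$, and that the deleted zero blocks have exactly the sizes $(k-(r+s))\times(\ell-(r+s))$, $r\times(m-(k+\ell)+r)$, and $(\ell-(r+s))\times(k-(r+s))$, so that what remains is the claimed square diagonal matrix. All of this is forced by $k\le\ell$ and $k+\ell<m$; once the partition is set correctly, each equality is a one-line application of unitary invariance.
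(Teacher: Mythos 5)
Your proposal is correct and is exactly the intended derivation: the paper offers no separate proof of Corollary~\ref{c_csnequal}, presenting it as an immediate read-off from the block structure of $\bD$ in Theorem~\ref{t_csnequal} via unitary invariance, which is precisely what you carry out (including the correct identification of the principal angles as $r$ zeros, the $s$ angles in $\bC,\bS$, and $k-(r+s)$ right angles, and the correct block sizes). Nothing to add.
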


\bibliography{Notes}
\end{document}